\definecolor{myred}{HTML}{880000}
\definecolor{mygreen}{HTML}{008800}
\definecolor{myblue}{HTML}{000088}
\definecolor{linkblue}{HTML}{0000BB}
\newtheorem{theorem}{Theorem}
\newtheorem{proposition}{Proposition}
\newtheorem{lemma}{Lemma}
\theoremstyle{definition}
\newtheorem{definition}{Definition}
\theoremstyle{remark}
\newtheorem{setup}{Problem Setting}
\title{Computational Guarantees for Doubly Entropic Wasserstein Barycenters via
Damped Sinkhorn Iterations}
\author{L\'{e}naïc Chizat and Tomas Va\v{s}kevi\v{c}ius}
\affil{EPFL, Institute of Mathematics, Switzerland}
\begin{document}

\maketitle

\begin{abstract}
We study the computation of doubly regularized Wasserstein barycenters, a
recently introduced family of entropic barycenters governed by inner and outer
regularization strengths. Previous research has demonstrated that various
regularization parameter choices unify several notions of entropy-penalized
barycenters while also revealing new ones, including a special case of debiased
barycenters.  In this paper, we propose and analyze an algorithm for computing
doubly regularized Wasserstein barycenters. Our procedure builds on damped
Sinkhorn iterations followed by exact maximization/minimization steps and
guarantees convergence for any choice of regularization parameters. An inexact
variant of our algorithm, implementable using approximate Monte Carlo sampling,
offers the first non-asymptotic convergence guarantees for approximating
Wasserstein barycenters between discrete point clouds in the
free-support/grid-free setting.
\end{abstract}


\section{Introduction}
\label{sec:introduction}

The Wasserstein distance between two probability distributions measures the
least amount of effort needed to reconfigure one measure into the other.
Unlike other notions of distances based solely on the numerical values taken by
the distribution functions (e.g., the Kullback-Leibler divergence), the
Wasserstein distance incorporates an additional layer of complexity by
considering pairwise distances between distinct points, measured by some
predetermined cost function.  As a result, the Wasserstein distances can be seen
to lift the geometry of the underlying space where the probability
measures are defined to the space of the probability measures itself.
This allows for a more thorough and geometrically nuanced understanding of the
relationships between different probability measures, which
proved to be a versatile tool of increasing importance in a broad spectrum of
areas.

Given a collection of probability measures and an associated set of positive weights that sum
to one, the corresponding Wasserstein barycenter minimizes the  weighted
sum of Wasserstein distances to the given measures.
In the special case of two measures and the squared Euclidean cost function,
Wasserstein barycenters concide with the notion of McCann's displacement
interpolation introduced in the seminal paper  \cite{mccann1997convexity}.
The general case, encompassing an arbitrary number of measures, was first
studied by \citet*{agueh2011barycenters}, where they also demonstrated a close
link between Wasserstein barycenters and the multi-marginal optimal transport
problem \cite{gangbo1998optimal}.
Recent years have witnessed an increasing number of applications of
Wasserstein barycenters across various scientific disciplines.
See, for instance, the following sample of works in economics
\cite{chiappori2010hedonic,carlier2010matching}, statistics
\cite{bernton2019parameter}, image processing \cite{rabin2012wasserstein},
and machine learning \cite{courty2017joint}, among other areas.
For further background and references we point the interested reader to the introductory surveys
\cite{peyre2019computational, panaretos2019statistical}
and the textbooks \cite{villani2003topics,
villani2009optimal, santambrogio2015optimal,
panaretos2020invitation, figalli2021invitation}.

Despite their compelling theoretical characteristics, the computation of
Wasserstein barycenters poses significant computational challenges, particularly
in large-scale applications.
While Wasserstein barycenters can be computed in polynomial time
for fixed dimensions \cite{altschuler2021wasserstein},
the approximation of Wasserstein barycenters is known to be NP--hard
\cite{altschuler2022wasserstein}.
Currently employed methods for approximating Wasserstein barycenters are
predominantly based on space discretizations. Unfortunately,
such strategies are only computationally practical for problems of relatively
modest scale. Although there are a handful of grid-free techniques available for
approximating Wasserstein barycenters (e.g.,
\cite{cohen2020estimating, korotin2021continuous, daaloul2021sampling, lindheim2023simple}),
we are not aware of any existing methods that provide bounds on computational
complexity.
One contribution of the present paper is to introduce a method that in some
regimes can provably approximate Wasserstein barycenters without relying on
space discretizations, but instead employing approximate Monte Carlo sampling.

More broadly, the difficulties associated with computation of the
optimal transport cost has prompted the exploration of
computationally efficient alternatives, leading to the consideration of
regularized Wasserstein distances.
Among these, the entropic penalty has
emerged as one of the most successful in applications.
The practical success of entropic penalization can be attributed to
Sinkhorn's algorithm \cite{sinkhorn1967diagonal},
which enables efficient and highly parallelizable
computation, an algorithm that gained substantial traction in the machine
learning community following the work of \citet*{cuturi2013sinkhorn}.
It is worth noting that entropic Wasserstein distances are of intrinsic
interest, beyond their approximation capabilities. Indeed, they hold a rich
historical connection to the Schr\"{o}dinger bridge problem
\cite{schrodinger1932theorie, wilson1969use, erlander1990gravity}, as
highlighted in the recent surveys \cite{leonard2013survey, chen2021stochastic}.
Furthermore, they increasingly serve as an analytically convenient tool
for studying the unregularized optimal transport problem (see, e.g.,  \cite{leonard2012schrodinger,
gentil2017analogy, fathi2020proof, chewi2022entropic})
and they underlie some favorable statistical properties
that are currently under active investigation; see the works
\cite{mena2019statistical, genevay2019sample, del2020statistical,
  pooladian2021entropic, rigollet2022sample,
  pooladian2023minimax} and the
references therein.

Let us now define the entropic optimal transport cost.
Consider two probability measures, $\mu$ and $\nu$, both supported on
$\mathcal{X}$, and let $c: \mathcal{X}\times\mathcal{X} \to
[0,\infty)$ be a cost function.
The entropic Wasserstein distance with a regularization level
$\lambda>0$ is defined as

\begin{equation}
  \label{eq:entropic-Wasserstein-distance}
  T_{\lambda}(\mu,\nu)
  = \inf_{\gamma \in \Pi(\mu, \nu)}\mathbf{E}_{(X,Y) \sim \gamma}[c(X,Y)] +
  \lambda \kl{\gamma}{\mu \otimes \nu},
\end{equation}
where $\Pi(\mu,\nu)$ denotes the set of probability measures on $\mathcal{X}
\times \mathcal{X}$ with marginal distributions equal to $\mu$ and $\nu$,
and $\mathrm{KL}(\cdot,\cdot)$ is the Kullback-Leibler divergence.
When $\lambda \to 0$, the regularized cost $T_{\lambda}(\mu,\nu)$ converges
to the unregularized Wasserstein distance.
Various properties of entropic optimal transport can be found in the
recent lecture notes by \citet*{leonard2013survey}.

To develop efficiently computable approximations for Wasserstein barycenters, a
natural approach is to replace the unregularized Wasserstein cost with the
minimizer of the weighted sum of entropy-regularized costs.
This method was first explored
by \citet*{cuturi2014fast} and it has gained additional traction in the recent
years. There is some flexibility in the definition of \eqref{eq:entropic-Wasserstein-distance},
which arises from substituting the reference product measure $\mu \otimes \nu$ with
alternatives such as the Lebesgue measure. Consequently, various notions of
entropic barycenters have emerged in the literature, which can be
unified through the following optimization problem:
\begin{equation}
  \label{eq:doubly-entropic-barycenter-definition}
  \min_{\mu} \sum_{j=1}^{k} T_{\lambda}(\mu, \nu^{j}) + \tau\kl{\mu}{\piref}.
\end{equation}
Here $\nu^{1},\dots,\nu^{k}$ are the probability measures whose barycenter we
wish to compute and $w_{1}, \dots, w_{k}$ are positive weights that sum to one.
The inner regularization strength is denoted by $\lambda > 0$ while
$\tau > 0$ is the outer regularization strength.
The measure $\piref$ is an arbitrary reference measure, the support of which
dictates the support of the computed barycenter.
For instance, if we take $\piref$ to be a uniform measure on a particular
discretization of the underlying space, we are dealing with a fixed-support
setup. On the other hand, letting $\piref$ be the Lebesgue measure
puts us in the free-support setup.
We shall refer to the minimizer of \eqref{eq:doubly-entropic-barycenter-definition}
as the $(\lambda, \tau)$-barycenter, which exists and is unique due to the
strict convexity of the outer regularization penalty; however, uniqueness may no
longer holds when $\tau = 0$.

The objective \eqref{eq:doubly-entropic-barycenter-definition} was recently
studied in \cite{chizat2023doubly}; it also appeared earlier in
\cite{ballu2020stochastic} for the special case $\tau \geq \lambda$, where
  stochastic approximation algorithms were considered for the computation of
fixed-support barycenters.
In \cite[Section 1.3]{chizat2023doubly}, it is discussed how various choices
of $(\lambda, \tau)$ relate to Barycenters previously explored in the
literature. To provide a brief overview, $(0,0)$
are the unregularized Wasserstein barycenters studied in
\cite{agueh2011barycenters}. Inner-regularized barycenters $(\lambda, 0)$
introduce a shrinking bias; this can be seen already when $k=1$, in which
case the solution computes a maximum-likelihood deconvolution
\cite{rigollet2018entropic}.
The $(\lambda, \lambda)$-barycenters were considered in
\cite{cuturi2014fast, benamou2015iterative, cuturi2018semidual,
bigot2019penalization, kroshnin2019complexity}; they introduce a blurring
bias. Likewise, blurring bias is introduced by the outer-regularized
barycenters $(0,\tau)$, studied in \cite{bigot2019data, carlier2021entropic}.
The only case not covered via the
formulation \eqref{eq:doubly-entropic-barycenter-definition} appears to be
the one of debiased Sinkhorn barycenters \cite{ramdas2017wasserstein,
janati2020debiased}, for which an algorithm exists but without
computational guarantees.
Of particular interest are the $(\lambda,\lambda/2)$ barycenters: the choice
$\tau=\lambda/2$ for smooth densities yields approximation bias of order
$\lambda^{2}$, while the
choice $\tau=\lambda$ results in bias of order $\lambda$, which is
significantly larger than $\lambda^{2}$ in the regimes of interest.
This is a new notion of entropic barycenters that was unveiled in the analysis
of \cite{chizat2023doubly}.
We provide the first convergence guarantees for this type of barycenters.

The regularity, stability, approximation, and statistical sample complexity
properties of $(\lambda,\tau)$-barycenters were investigated in
\cite{chizat2023doubly}. However, the question of obtaining non-asymptotic
convergence guarantees for the computation of $(\lambda, \tau)$-barycenters with arbitrary
regularization parameters was not addressed therein.
In particular, the $(\lambda, \lambda/2)$ case, which has stood out due to its
compelling mathematical features, has not yet been addressed in the
existing literature. This gap is addressed by the present paper;
we summarize our contributions in the following section.

\subsection{Contributions}

The remainder of this paper is organized as follows:
Section~\ref{sec:background} provides the necessary background on
entropic optimal transport and a particular dual problem of the doubly
regularized entropic objective \eqref{eq:doubly-entropic-barycenter-definition}.
Section~\ref{sec:damped-sinkhorn} introduces a damped Sinkhorn iteration scheme
and complements it with convergence guarantees.
An approximate version of the algorithm together with convergence results and
implementation details is discussed in Section~\ref{sec:approximate-damped-sinkhorn}.
We summarize our key contributions:
\begin{enumerate}
  \item
    Lemma~\ref{lemma:suboptimality-to-kl}, presented in
    Section~\ref{sec:damped-sinkhorn}, demonstrates that bounds on the dual
    suboptimality gap for the dual problem \eqref{eq:doubly-entropic-dual},
    defined in Section~\ref{sec:doubly-entropic-barycenters}, can be
    translated into Kullback-Leibler divergence bounds between the
    $(\lambda,\tau)$-barycenter and the barycenters corresponding to dual-feasible variables.
    This translation enables us to formulate all our subsequent results in
    terms of optimizing the dual objective \eqref{eq:doubly-entropic-dual}.

  \item
    In Section~\ref{sec:damped-sinkhorn}, we introduce a damped Sinkhorn scheme
    (Algorithm~\ref{alg:exact}) that can be employed to optimize
    $(\lambda,\tau)$-barycenters for any choice of regularization parameters.
    The damping factor $\min(1,\tau/\lambda)$ accommodates the degrading
    smoothness properties of the dual objective \eqref{eq:doubly-entropic-dual}
    as a function of decreasing outer regularization parameter $\tau$.  The
    introduced damping of the Sinkhorn iterations is, in fact, necessary and it
    is one of our core contributions:
    undamped exact scheme can be experimentally shown to diverge as soon as
    $\tau < \lambda/2$.

  \item The main result of this paper is
    Theorem~\ref{thm:exact-scheme-convergence} proved in
    Section~\ref{sec:damped-sinkhorn}. It provides
    convergence guarantees for Algorithm~\ref{alg:exact} with
    arbitrary choice of regularization parameters $\lambda,\tau > 0$.
    This, in particular, results in the first algorithm with guarantees
    for computing $(\lambda,\lambda/2)$ barycenters. For smooth densities,
    these barycenters incur a
    bias of order $\lambda^{2}$ in contrast to the predominantly studied
    $(\lambda, \lambda)$ barycenters that incur bias of order $\lambda$.

  \item In Section~\ref{sec:approximate-damped-sinkhorn}, we describe
    Algorithm~\ref{alg:inexact}, an extension of Algorithm~\ref{alg:exact} that
    allows us to perform inaccurate updates.
    We formulate sufficient conditions on the inexact updates oracle under
    which the errors in the convergence analysis do not accumulate.
    Section~\ref{sec:inexact-oracle-implementation} details an implementation
    of this inexact oracle, based on approximate Monte Carlo sampling.

  \item Theorem~\ref{thm:inexact-scheme-convergence} proved in
    Section~\ref{sec:approximate-damped-sinkhorn} furnishes convergence
    guarantees for Algorithm~\ref{alg:inexact}.
    When combined with the implementation of the inexact oracle described in
    Section~\ref{sec:inexact-oracle-implementation}, this yields a provably
    convergent scheme for a grid-free computation of entropic Wasserstein
    barycenters between discrete distributions, provided sufficient regularity
    on the domain $\mathcal{X}$ and the cost function $c$.
\end{enumerate}

\section{Background and Notation}
\label{sec:background}

This section provides the background material on doubly regularized entropic
Wasserstein barycenters and introduces the notation used throughout the paper.
In the remainder of the paper, let $\mathcal{X}$ be a
compact and convex subset of $\mathbb{R}^{d}$ with a non-empty
interior. Let $\mathcal{P}(\mathcal{X})$ denote the set of probability measures on
$\mathcal{X}$ endowed with Borel sigma-algebra.
Let $c: \mathcal{X} \times \mathcal{X} \to [0, \infty)$ be a cost function such
that $c_{\infty}(\mathcal{X}) = \sup_{x,x' \in \mathcal{X}} c(x,x') < \infty$.
We denote by $\kl{\cdot}{\cdot}$ the Kullback-Leibler divergence,
$\|\cdot\|_{\mathrm{TV}}$ is the total-variation norm, and
$\|f\|_{\mathrm{osc}} = \sup_{x}f(x) - \inf_{x'}f(x')$ is the oscillation
norm.
Given two measures $\nu,\nu'$, the notation $\nu \ll \nu'$ denotes that $\nu$
is absolutely continuous with respect to the measure $\nu'$; in this case
$d\nu/d\nu'$ denotes the Radon-Nikodym derivative of $\nu$ with respect to
$\nu'$. Finally, throughout the paper $w$ denotes a vector of $k$ strictly
positive elements that sum to one.

\subsection{Entropic Optimal Transport}
\label{sec:entropic-ot}

For any $\mu,\nu \in \mathcal{P}(\mathcal{X})$ define the entropy regularized
optimal transport problem by
\begin{equation}
  \label{eq:entropic-ot-problem}
  T_{\lambda}(\mu,\nu)
  = \inf_{\gamma \in \Pi(\mu,\nu)}\mathbf{E}_{(X,Y) \sim \gamma}[c(X,Y)]
  + \lambda\kl{\gamma}{\mu \otimes \nu},
\end{equation}
where $\mathrm{KL}$ is the Kullback-Leibler divergence and
$\Pi(\mu,\nu) \subseteq \mathcal{P}(\mathcal{X}\otimes\mathcal{X})$ is the set
of probability measures such that for any $\gamma \in \Pi(\mu,\nu)$ and any
Borel subset $A$ of $\mathcal{X}$ it holds that
$\gamma(A \times \mathcal{X}) = \mu(A)$ and $\gamma(\mathcal{X} \times A) =
\nu(A)$.

Let $E_{\lambda}^{\mu,\nu} : L_{1}(\mu) \times L_{1}(\nu) \to \mathbb{R}$ be
the function defined by
\begin{align}
  \begin{split}
  \label{eq:E-dfn}
  E_{\lambda}^{\mu,\nu}(\phi, \psi)
  &= \mathbf{E}_{X \sim \mu}[\phi(X)] + \mathbf{E}_{Y \sim \nu}[\psi(Y)]
  \\&\quad\quad+ \lambda\left(1 -
    \int_{\mathcal{X}} \int_{\mathcal{X}}
    \exp\left(\frac{\phi(x) + \psi(y) - c(x,y)}{\lambda}\right)
    \nu(dy)\mu(dx)
  \right).
\end{split}
\end{align}
The entropic optimal transport problem \eqref{eq:entropic-ot-problem}
admits the following dual representation:
\begin{equation}
  \label{eq:entropic-ot-dual-representation}
  T_{\lambda}(\mu, \nu)
  = \max_{\phi,\psi}
  E_{\lambda}^{\mu,\nu}(\phi, \psi).
\end{equation}
For any $\psi$ define
\begin{equation}
  \phi_{\psi} \in \mathrm{argmax}_{\phi \in L_{1}(\mu)}
  E_{\lambda}^{\mu,\nu}(\phi, \psi).
\end{equation}
The solution is unique $\mu$-almost everywhere up to a constant; we fix a
particular choice
\begin{equation}
  \phi_{\psi}(x) =
  -\lambda\log\left(
    \int_{\mathcal{X}}
  \exp\left(\frac{\psi(y) - c(x,y)}{\lambda}\right)\nu(dy)\right).
\end{equation}
Likewise, we denote
$\psi_{\phi} = \mathrm{argmax}_{\psi \in L_{1}(\nu)}E_{\lambda}^{\mu,\nu}(\phi,
\psi)$ with the analogous expression to the one given above, interchanging the
roles of $\phi$ and $\psi$. Then,
the maximum in \eqref{eq:entropic-ot-dual-representation}
is attained by any pair $(\phi^{*}, \psi^{*})$ such that
$\phi^{*} = \phi_{\psi^{*}}$ and $\psi^{*} = \psi_{\phi^{*}}$; such a pair is
said to solve the Schr\"{o}dinger system and it is unique up to translations
$(\phi^{*}+a,\psi^{*}-a)$ by any constant $a\in \mathbb{R}$.
The optimal coupling that solves the primal problem
\eqref{eq:entropic-ot-problem} can be obtained from the pair
$(\phi^{*}, \psi^{*})$ via the primal-dual relation
\begin{equation}
  \label{eq:entropic-primal-dual-relation}
  \gamma^{*}(dx,dy) =
  \exp\left(\frac{\phi^{*}(x) + \psi^{*}(y) - c(x,y)}{\lambda}\right)
  \mu(dx)\nu(dy).
\end{equation}
We conclude this section by listing two properties of functions of the form
$\phi_{\psi}$. These properties will be used repeatedly throughout this paper.
First, for any $\psi$ we have
\begin{equation}
  \int_{\mathcal{X}}\int_{\mathcal{X}}
  \exp\left(\frac{\phi_{\psi}(x) + \psi(y) - c(x,y)}{\lambda}\right)
  \nu(dy)\mu(dx) = 1,
\end{equation}
which means, in particular, that for any $\psi$ we have
\begin{equation}
  \label{eq:semi-dual-entropic-ot}
  E_{\lambda}^{\mu,\nu}(\phi_{\psi}, \psi)
  = \mathbf{E}_{X \sim \mu}[\phi_{\psi}(X)] + \mathbf{E}_{Y \sim \nu}[\psi(Y)].
\end{equation}
The second property of interest is that for any $\psi$ and any $x,x' \in
\mathcal{X}$ it holds that
\begin{align}
  \phi_{\psi}(x) - \phi_{\psi}(x')
  &=
  -\lambda\log
  \frac{
    \int
    \exp\left(\frac{\psi(y) - c(x,y)}{\lambda}\right)\nu(dy)
  }
  {\int
    \exp\left(\frac{\psi(y) - c(x',y)}{\lambda}\right)\nu(dy)
  }
  \\
  &=
  -\lambda\log
  \frac{
    \int
    \exp\left(\frac{\psi(y) - c(x',y) + c(x',y) - c(x,y)}{\lambda}\right)\nu(dy)
  }
  {\int
    \exp\left(\frac{\psi(y) - c(x',y)}{\lambda}\right)\nu(dy)
  }
  \\
  &\leq \sup_{y \in \mathcal{X}} c(x', y) - c(x,y)
  \leq c_{\infty}(\mathcal{X}).
\end{align}
In particular, for any $\psi$ we have
\begin{equation}
  \label{eq:schroedinger-potentials-bounded}
  \|\phi_{\psi}\|_{\mathrm{osc}} = \sup_{x} \phi_{\psi}(x) -
  \inf_{x'}\phi_{\psi}(x')
  \leq c_{\infty}(\mathcal{X}).
\end{equation}

\subsection{Doubly Regularized Entropic Barycenters}
\label{sec:doubly-entropic-barycenters}

Let $\vecnu = (\nu^{1}, \dots, \nu^{k}) \in \mathcal{P}(\mathcal{X})^{k}$
be $k$ probability measures and let $w \in \mathbb{R}^{k}$ be a vector of
positive numbers that sum to one.
Given the inner regularization strength $\lambda > 0$ and the outer regularization
strength $\tau > 0$, the $(\lambda,\tau)$ barycenter $\mu_{\lambda,\tau} \in
  \mathcal{P}(\mathcal{X})$
of probability measures
$\vecnu$ with respect to the weights vector $w$ is defined as the unique
solution to the following optimization problem:
\begin{equation}
  \label{eq:doubly-regularized-barycenter-primal}
  \mu_{\lambda,\tau}
  = \mathrm{argmin}_{\mu \in \mathcal{P}(\mathcal{X})}\,
  \sum_{j=1}^{k}w_{j}
  T_{\lambda}(\mu,\nu^{j})
  + \tau\kl{\mu}{\piref},
\end{equation}
where $\piref \in \mathcal{P}(\mathcal{X})$ is a reference probability measure.

We will now describe how to obtain a concave dual maximization problem to the
primal problem \eqref{eq:doubly-regularized-barycenter-primal}, following along
the lines of \citet*[Section 2.3]{chizat2023doubly}, where the interested
reader will find a comprehensive justification of all the claims made in the
rest of this section.

First, using the semi-dual formulation of entropic optimal transport problem
\eqref{eq:semi-dual-entropic-ot}, we have, for each $j \in \{1,\dots,k\}$
\begin{equation}
  T_{\lambda}(\mu,\nu^{j})
  = \sup_{\psi^{j} \in L_{1}(\nu^{j})} \mathbf{E}_{X\sim\mu}[\phi_{\psi^{j}}(X)]
  + \mathbf{E}_{Y \sim \nu^{j}}[\psi^{j}(Y)].
\end{equation}
Denote $\vecpsi = (\psi^{1},\dots,\psi^{j}) \in L_{1}(\vecnu)$. Then, we may
rewrite the primal problem \eqref{eq:doubly-regularized-barycenter-primal} by
\begin{equation}
  \min_{\mu \in \mathcal{P}(X)} \max_{\vecpsi \in L_{1}(\vecnu)}
  \sum_{j=1}^{k}w_{j} \mathbf{E}_{Y \sim \nu^{j}}\big[\psi^{j}(Y)\big]
  +
  \mathbf{E}_{X\sim\mu}\big[\sum_{j=1}^{k}w_{j}\phi_{\psi^{j}}(X)\big]
  + \tau\kl{\mu}{\piref}.
\end{equation}
Interchanging $\min$ and $\max$, which is justified using compactness of
$\mathcal{X}$ as detailed in \cite{chizat2023doubly}, we obtain the dual optimization
objective $E_{\lambda,\tau}^{\vecnu, w} : L_{1}(\vecnu) \to \mathbb{R}$
defined by
\begin{align}
  \begin{split}
  \label{eq:doubly-entropic-dual}
    E_{\lambda,\tau}^{\vecnu, w}(\vecpsi)
    &= \min_{\mu \in \mathcal{P}(X)}
    \sum_{j=1}^{k}w_{j} \mathbf{E}_{Y \sim \nu^{j}}\big[\psi^{j}(Y)\big]
    +
    \mathbf{E}_{X\sim\mu}\big[\sum_{j=1}^{k}w_{j}\phi_{\psi^{j}}(X)\big]
    + \tau\kl{\mu}{\piref}.
    \\
    &=
    \sum_{j=1}^{k}w_{j} \mathbf{E}_{Y \sim \nu^{j}}\big[\psi^{j}(Y)\big]
    - \tau
    \log \int \exp\left(\frac{-\sum_{j=1}^{k}\phi_{\psi^{j}}(x)}
    {\tau}\right) \piref(dx).
  \end{split}
\end{align}
The infimum above is attained by the measure
\begin{equation}
  \label{eq:doubly-entropic-argmin-mu}
  \mu_{\vecpsi}(dx)
  = Z_{\vecpsi}^{-1}\exp\left(
    \frac{-\sum_{j=1}^{k}\phi_{\psi^{j}}(x)}{\tau}
  \right)\piref(dx),\quad
  Z_{\vecpsi} =
    \int \exp\left(\frac{-\sum_{j=1}^{k}\phi_{\psi^{j}}(x)}
    {\tau}\right) \piref(dx).
\end{equation}
To each dual variable $\vecpsi$ we associate
the marginal measures $\nu^{j}_{\vecpsi}(dy)$
defined for $j=1,\dots,k$ by
\begin{equation}
  \label{eq:psi-marginals}
  \nu^{j}_{\vecpsi}(dy)
  = \nu^{j}(dy)\int\exp\left(
    \frac{\phi_{\psi^{j}}(x) + \psi^{j}(y) - c(x,y)}{\lambda}
  \right)\mu_{\vecpsi}(dx).
\end{equation}
Finally, we mention that the objective
$E^{\vecnu, w}_{\lambda,\tau}$
is
concave and for any $\vecpsi, \vecpsi'$ it holds that
\begin{equation}
  \lim_{h \to 0}
  \frac{
    E^{\vecnu, w}_{\lambda,\tau}(\vecpsi + h\vecpsi')
    -
    E^{\vecnu, w}_{\lambda,\tau}(\vecpsi)
  }{h}
  =
  \sum_{j=1}^{k}w_{j}\left(
    \mathbf{E}_{\nu^{j}}[(\psi')^{j}] -
    \mathbf{E}_{\nu^{j}_{\vecpsi}}[(\psi')^{j}]
  \right)
  \frac{}{}.
\end{equation}
In particular, fixing any optimal dual variable $\vecpsi^{*}$, for any
$\vecpsi$ it holds using concavity of $E^{\vecnu,w}_{\lambda,\tau}$ that
\begin{equation}
  \label{eq:concave-suboptimality-gap}
  0 \leq E(\vecpsi^{*}) - E(\vecpsi)
  \leq \sum_{j=1}^{k}w_{k}\left(
    \mathbf{E}_{\nu^{j}}\left[(\psi^{*})^{j} - \psi^{j}\right]
    -
    \mathbf{E}_{\nu^{j}_{\vecpsi}}\left[
      (\psi^{*})^{j} - \psi^{j}
    \right]
  \right).
\end{equation}
This concludes our overview of the background material on
$(\lambda,\tau)$-barycenters.


\section{Damped Sinkhorn Scheme}
\label{sec:damped-sinkhorn}

This section introduces a damped Sinkhorn-based optimization scheme
(Algorithm~\ref{alg:exact}) and provides guarantees for its convergence
(Theorem~\ref{thm:exact-scheme-convergence}). Before describing the algorithm,
we make a quick detour to the following lemma, proved in
Appendix~\ref{sec:proof-of-suboptimality-to-kl-lemma},
which shows that the sub-optimality gap bounds on the dual objective
\eqref{eq:doubly-entropic-dual} can be transformed into corresponding bounds on
relative entropy between the $(\lambda,\tau)$-barycenter and the barycenter
associated to a given dual variable.

\begin{lemma}
  \label{lemma:suboptimality-to-kl}
  Fix any $\lambda,\tau > 0$ and $\vecnu,w$.
  Let $\vecpsi^{*}$ be the maximizer of dual problem
  $E^{\vecnu, w}_{\lambda,\tau}$ and let $\mu_{\boldsymbol{\psi}^{*}}$ be the
  corresponding minimizer of the primal objective
  \eqref{eq:doubly-regularized-barycenter-primal}.
  Then, for any $\vecpsi \in L_{1}(\vecnu)$ we have
  \begin{equation}
    \kl{\mu_{\boldsymbol{\psi}^{*}}}{\mu_{\vecpsi}}
    \leq \tau^{-1}(E^{\vecnu, w}_{\lambda, \tau}
    (\boldsymbol{\psi}^{*})
    -E^{\vecnu, w}_{\lambda, \tau}(\vecpsi)).
  \end{equation}
\end{lemma}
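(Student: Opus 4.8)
The plan is to turn $\kl{\mu_{\vecpsi^{*}}}{\mu_{\vecpsi}}$ into an \emph{exact} identity involving the dual gap $E^{\vecnu,w}_{\lambda,\tau}(\vecpsi^{*})-E^{\vecnu,w}_{\lambda,\tau}(\vecpsi)$ plus a residual, and then to show the residual is nonpositive using the optimality of $\vecpsi^{*}$. Abbreviate $E=E^{\vecnu,w}_{\lambda,\tau}$ and recall from \eqref{eq:doubly-entropic-argmin-mu} that $\mu_{\vecpsi}$ and $\mu_{\vecpsi^{*}}$ are both Gibbs densities against $\piref$ with normalizers $Z_{\vecpsi},Z_{\vecpsi^{*}}$. \textbf{Step 1 (expand the relative entropy).} Forming the Radon--Nikodym derivative $d\mu_{\vecpsi^{*}}/d\mu_{\vecpsi}$ from \eqref{eq:doubly-entropic-argmin-mu}, taking logarithms and integrating against $\mu_{\vecpsi^{*}}$ gives
\[
  \tau\,\kl{\mu_{\vecpsi^{*}}}{\mu_{\vecpsi}}
  = \tau\log Z_{\vecpsi} - \tau\log Z_{\vecpsi^{*}}
  + \sum_{j=1}^{k} w_{j}\,\mathbf{E}_{\mu_{\vecpsi^{*}}}\!\big[\phi_{\psi^{j}} - \phi_{\psi^{*j}}\big].
\]
\textbf{Step 2 (insert the dual objective).} The closed form \eqref{eq:doubly-entropic-dual} yields $\tau\log Z_{\vecpsi}=\sum_{j}w_{j}\mathbf{E}_{\nu^{j}}[\psi^{j}]-E(\vecpsi)$, and likewise for $\vecpsi^{*}$; substituting and regrouping,
\[
  \tau\,\kl{\mu_{\vecpsi^{*}}}{\mu_{\vecpsi}}
  = \big(E(\vecpsi^{*}) - E(\vecpsi)\big)
  + \sum_{j=1}^{k} w_{j}\Big(\mathbf{E}_{\nu^{j}}[\psi^{j}] + \mathbf{E}_{\mu_{\vecpsi^{*}}}[\phi_{\psi^{j}}]\Big)
  - \sum_{j=1}^{k} w_{j}\Big(\mathbf{E}_{\nu^{j}}[\psi^{*j}] + \mathbf{E}_{\mu_{\vecpsi^{*}}}[\phi_{\psi^{*j}}]\Big),
\]
so it remains to show the third sum dominates the second.

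\textbf{Step 3 (bound the $\vecpsi$-term by entropic transport costs).} For each $j$, the pair $(\phi_{\psi^{j}},\psi^{j})$ is admissible in the dual \eqref{eq:entropic-ot-dual-representation} of $T_{\lambda}(\mu_{\vecpsi^{*}},\nu^{j})$, and by \eqref{eq:semi-dual-entropic-ot} its value equals exactly $\mathbf{E}_{\nu^{j}}[\psi^{j}]+\mathbf{E}_{\mu_{\vecpsi^{*}}}[\phi_{\psi^{j}}]$; hence this quantity is $\le T_{\lambda}(\mu_{\vecpsi^{*}},\nu^{j})$, and the second sum is $\le\sum_{j}w_{j}T_{\lambda}(\mu_{\vecpsi^{*}},\nu^{j})$. \textbf{Step 4 (the starred potentials attain these costs).} Here I use the optimality of $\vecpsi^{*}$: setting the directional derivative of $E$ displayed just before \eqref{eq:concave-suboptimality-gap} to zero at $\vecpsi^{*}$ forces $\nu^{j}_{\vecpsi^{*}}=\nu^{j}$ for every $j$. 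By definition \eqref{eq:psi-marginals} this means $\int \exp\big(\lambda^{-1}(\phi_{\psi^{*j}}(x)+\psi^{*j}(y)-c(x,y))\big)\,\mu_{\vecpsi^{*}}(dx)=1$ for $\nu^{j}$-a.e.\ $y$, which is the second Schr\"odinger equation for the pair $(\mu_{\vecpsi^{*}},\nu^{j})$; since $\phi_{\psi^{*j}}$ is by construction the first Schr\"odinger map of $\psi^{*j}$ against $\nu^{j}$, the pair $(\phi_{\psi^{*j}},\psi^{*j})$ solves the Schr\"odinger system and thus attains the maximum in \eqref{eq:entropic-ot-dual-representation}, i.e.\ $\mathbf{E}_{\nu^{j}}[\psi^{*j}]+\mathbf{E}_{\mu_{\vecpsi^{*}}}[\phi_{\psi^{*j}}]=T_{\lambda}(\mu_{\vecpsi^{*}},\nu^{j})$ by \eqref{eq:semi-dual-entropic-ot}. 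Summing over $j$ with weights $w_{j}$ shows the third sum equals $\sum_{j}w_{j}T_{\lambda}(\mu_{\vecpsi^{*}},\nu^{j})$, which by Step 3 is at least the second sum; the residual in Step 2 is nonpositive, and the lemma follows. (One may instead bypass the first-order condition and invoke strong duality: $E(\vecpsi^{*})$ equals both the optimal primal value $\sum_{j}w_{j}T_{\lambda}(\mu_{\vecpsi^{*}},\nu^{j})+\tau\kl{\mu_{\vecpsi^{*}}}{\piref}$ and, since $\mu_{\vecpsi^{*}}$ attains the inner minimum in \eqref{eq:doubly-entropic-dual}, the value $\sum_{j}w_{j}(\mathbf{E}_{\nu^{j}}[\psi^{*j}]+\mathbf{E}_{\mu_{\vecpsi^{*}}}[\phi_{\psi^{*j}}])+\tau\kl{\mu_{\vecpsi^{*}}}{\piref}$; comparing and using $w_{j}>0$ forces term-by-term equality.)

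\textbf{Main obstacle.} The only step with genuine content is Step 4 --- recognising that at the dual optimum the ``half-potentials'' $\psi^{*j}$ are bona fide optimal Schr\"odinger potentials for the entropic transport between the (a priori unknown) barycenter $\mu_{\vecpsi^{*}}$ and each $\nu^{j}$. Steps 1--3 are bookkeeping with the Gibbs densities and the definition of $E$, with the one point requiring care being that the weighting $w_{j}$ inside the exponent of $\mu_{\vecpsi}$ must match the $w_{j}$ multiplying the $\mathbf{E}_{\nu^{j}}$-terms in $E$ so that the residual regroups as a single weighted sum.
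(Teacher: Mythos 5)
Your proof is correct and follows essentially the same route as the paper: both expand the dual gap as $\tau\,\kl{\mu_{\vecpsi^{*}}}{\mu_{\vecpsi}}$ plus a residual $\sum_{j}w_{j}\bigl(E^{\mu_{\vecpsi^{*}},\nu^{j}}_{\lambda}(\phi_{\psi^{*j}},\psi^{*j})-E^{\mu_{\vecpsi^{*}},\nu^{j}}_{\lambda}(\phi_{\psi^{j}},\psi^{j})\bigr)$ and then argue the residual is nonnegative because $(\phi_{\psi^{*j}},\psi^{*j})$ is Schr\"odinger-optimal for $(\mu_{\vecpsi^{*}},\nu^{j})$. The only differences are cosmetic: you derive the key identity by directly expanding $\kl{\mu_{\vecpsi^{*}}}{\mu_{\vecpsi}}$ from the Gibbs densities rather than invoking the Donsker--Varadhan variational formula (the two computations coincide at the attaining measure), and in Step 4 you supply the justification---via the first-order condition $\nu^{j}_{\vecpsi^{*}}=\nu^{j}$ and the two Schr\"odinger equations---that the paper asserts without spelling out.
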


We now turn to describing an iterative scheme that ensures convergence of the
dual suboptimality gap to zero.
Let $\vecpsi_{t}$ be an iterate at time $t$. Then, we have
\begin{equation}
  E^{\vecnu,w}_{\lambda, \tau}(\vecpsi_{t})
  = L(\vecpsi_{t}, \vecphi_{t}, \mu_{t})
  = \sum_{j=1}^{k}w_{j}\mathbf{E}_{\nu^{j}}[\psi_{t}^{j}]
  -\mathbf{E}_{\mu_{t}}[\phi_{t}^{j}]
  +\tau\kl{\mu_{t}}{\piref},
\end{equation}
where
\begin{equation}
  \label{eq:maxi-minimization-steps}
  \phi^{j}
  = \mathrm{argmax}_{\phi}\,
  E^{\mu_{t-1},\nu^{j}}_{\lambda}(\phi,\psi^{j}_{t})
  \quad\text{and}\quad
  \mu_{t} = \mathrm{argmin}_{\mu}\, \bigg\{
    \mathbf{E}_{\mu}\big[\sum_{j}w_{j}\phi^{j}_{t}\big] +
  \tau\kl{\mu}{\piref}\bigg\}.
\end{equation}
In particular, when optimizing the dual objective $E^{\nu,w}_{\lambda, \tau}$,
every time the variable $\vecpsi_{t}$ is updated, it automatically triggers
the exact maximization/minimization steps defined in
\eqref{eq:maxi-minimization-steps}.
It is thus a natural strategy to fix $\vecphi_{t}$ and $\mu_{t}$ and perform
exact minimization on $\vecpsi$, which can be done in closed form:
\begin{equation}
  \label{eq:undamped-updates}
  \psi^{j}_{t+1} =
  \mathrm{argmax}_{\psi}\,
  E^{\mu_{t},\nu^{j}}_{\lambda}(\phi^{j}_{t},\psi)
  =
  \psi^{j}_{t} -\lambda\log\frac{d\nu_{t}^{j}}{d\nu^{j}},
\end{equation}
where $\nu_{j}^{t}$ denotes the marginal distribution $\nu^{j}_{\vecpsi_{t}}$
defined in \eqref{eq:psi-marginals}.
The update \eqref{eq:undamped-updates} performs a Sinkhorn update on each block
of variables $\psi^{j}$.
Together, the update \eqref{eq:undamped-updates} followed by
\eqref{eq:maxi-minimization-steps} results in the iterative Bregman projections
algorithm introduced in \cite{benamou2015iterative}. In
\cite{kroshnin2019complexity}, it was shown that this scheme converges for the
$(\lambda,\lambda)$-barycenters.
The analysis of \cite{kroshnin2019complexity} is built upon a
different dual formulation from the one considered in our work;
this alternative formulation is only available
when $\tau = \lambda$ \cite[Section 2.3]{chizat2023doubly}
and thus excludes the consideration of debiased barycenters $(\lambda, \lambda/2)$.

We have observed empirically that the iterates of the iterative Bregman
projections (i.e., the scheme of updates defined in \eqref{eq:undamped-updates}
and \eqref{eq:maxi-minimization-steps})
diverge whenever $\tau < \lambda/2$. Indeed, decreasing the outer
regularization parameter $\tau$ makes the minimization step in
\eqref{eq:maxi-minimization-steps} less stable. As a result, the cumulative
effect of performing the updates \eqref{eq:undamped-updates} and
\eqref{eq:maxi-minimization-steps} may result in a decrease in the value of
the optimization objective $E^{\vecnu,w}_{\lambda,\tau}$.

One of the main contributions of our work is to show that this bad behaviour
can be mitigated by damping the exact Sinkhorn updates
\eqref{eq:undamped-updates}. This leads to Algorithm~\ref{alg:exact} for which
convergence guarantees are provided in
Theorem~\ref{thm:exact-scheme-convergence} stated below.

\begin{algorithm}
  \caption{Exact Damped Sinkhorn Scheme}
  \label{alg:exact}
  \KwIn{regularization strengths $\lambda,\tau > 0$,
  reference measure $\piref$,number of iterations $T$ and
  $k$ marginal measures $\nu^{1},\dots,\nu^{k}$ with positive weights
  $w_{1}, \dots, w_{k}$ such that $\sum_{j=1}^{k}w_{j} = 1$.}
  \begin{enumerate}
    \item
      Set $\eta=\min(1, \tau/\lambda)$
      and initialize $(\psi^{j}_{0})=0$
      for $j \in \{1,\dots,k\}$.
  \item For $t=0,1\dots,T-1$ do
  \begin{enumerate}
    \item $\phi_{t}^{j}(x)
      \leftarrow
      - \lambda \log\int_{\mathcal{X}} \exp((\psi_{t}^{j}(y)-c(x,y))/\lambda)\nu^{j}(dy)$
      for $j \in \{1,\dots,k\}$
    \item $V_{t}(x) \leftarrow \sum_{j=1}^{k} w_j \phi^{j}(t)(x)$
    \item $Z_{t}  \leftarrow \int \exp(-V_{t}(x)/\tau)d \piref(dx)$
    \item $\mu_{t}(dx) \leftarrow Z_{t}^{-1}\exp(-V_{t}(x)/\tau)\piref(dx)$
    \item $
      \frac{d\nu^{j}_{t}}{d\nu^{j}}(y)
      \leftarrow
      \int\exp\left(
      \frac{\phi^{j}_{t}(x) + \psi^{j}_{t}(y) - c(x,y)}{\lambda}\right)\mu_{t}(dx)$
      for $j \in \{1,\dots,k\}$.
    \item $\psi^{j}_{t+1}(y) \leftarrow \psi^{j}_{t}(y) - \eta\lambda\log
      \frac{d\nu^{j}_{t}}{d\nu^{j}}(y)$
      for $j \in \{1,\dots,k\}$.
    \end{enumerate}
  \item Return $(\phi_{T}^{j}, \psi_{T}^{j})_{j=1}^{k}$.
  \end{enumerate}
\end{algorithm}

\begin{theorem}
  \label{thm:exact-scheme-convergence}
  Fix any $\lambda,\tau > 0$ and $\vecnu,w$.
  Let $\psi^{*}$ be the maximizer of dual problem
  $E^{\vecnu, w}_{\lambda,\tau}$.
  Let $(\vecpsi_{t})_{t \geq 0}$ be the sequence of iterates generated by
  Algorithm~\ref{alg:exact}.
  Then, for any $t \geq 1$ it holds that
  \begin{equation}
    E^{\vecnu, w}_{\lambda,\tau}(\vecpsi^{*})
    -
    E^{\vecnu, w}_{\lambda,\tau}(\vecpsi_{t})
    \leq \frac{2c_{\infty}(\mathcal{X})^{2}}{\min(\lambda,\tau)}
    \,\frac{1}{t}.
  \end{equation}
\end{theorem}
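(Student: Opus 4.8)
The plan is to run the standard mirror-descent/Sinkhorn-as-coordinate-ascent analysis, but carefully tracking how the damping factor $\eta = \min(1,\tau/\lambda)$ restores a descent inequality that fails without it. The key quantity to monitor is a Bregman-divergence potential of the iterates towards the optimum, measured in the entropic geometry. Concretely, I would introduce the divergence
\begin{equation}
  D_t := \sum_{j=1}^{k} w_j \, \kl{\nu^{j}}{\nu^{j}_{\vecpsi_t}} \quad\text{or, dually,}\quad \widetilde D_t := \sum_{j=1}^{k} w_j \, \mathbf{E}_{\nu^{j}}\!\left[\psi_t^{j} - (\psi^{*})^{j}\right] + (\text{correction}),
\end{equation}
and show two facts: (i) a \emph{one-step progress} inequality of the form $E(\vecpsi^{*}) - E(\vecpsi_{t+1}) \le E(\vecpsi^{*}) - E(\vecpsi_t) - \eta \lambda \cdot(\text{something nonnegative controlled by the gap})$, and (ii) a uniform bound on the relevant potential using the oscillation bound $\|\phi_\psi\|_{\mathrm{osc}} \le c_\infty(\mathcal{X})$ from \eqref{eq:schroedinger-potentials-bounded}.

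First I would write down the exact change in the dual objective across one iteration. Since the maximization/minimization steps \eqref{eq:maxi-minimization-steps} are exact, $E^{\vecnu,w}_{\lambda,\tau}(\vecpsi_t) = L(\vecpsi_t,\vecphi_t,\mu_t)$, and the $\vecpsi$-update is a \emph{damped} Sinkhorn step rather than exact coordinate maximization. I would compute $L(\vecpsi_{t+1},\vecphi_t,\mu_t) - L(\vecpsi_t,\vecphi_t,\mu_t)$ explicitly: because $\psi^{j}_{t+1} - \psi^{j}_{t} = -\eta\lambda\log(d\nu^{j}_t/d\nu^{j})$, this difference is a weighted sum of KL-type terms, and by strong convexity of $\kl{\cdot}{\nu^{j}}$ one gets a clean expression like $-\eta \lambda \sum_j w_j \kl{\nu^j}{\nu^j_t} + (\text{curvature term from } \eta^2)$. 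Then I would bound the \emph{further} change caused by re-optimizing $\vecphi$ and $\mu$ at the new $\vecpsi_{t+1}$ — this is where damping is essential: the minimization over $\mu$ with outer strength $\tau$ has a "$\log$-sum-exp" curvature that can \emph{amplify} perturbations by a factor up to $\lambda/\tau$, and choosing $\eta \le \tau/\lambda$ exactly cancels this amplification, ensuring the objective does not decrease. This is the step I expect to be the main obstacle: making the smoothness/curvature estimate for the $\mu$-minimization tight enough that $\eta=\min(1,\tau/\lambda)$ suffices, presumably via a second-order Taylor bound on $\tau\log\int e^{-V/\tau}\,d\piref$ combined with the oscillation control on $V_t = \sum_j w_j \phi^j_t$.

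Having established monotone improvement together with a quantitative per-step decrease of the gap $\delta_t := E(\vecpsi^{*}) - E(\vecpsi_t)$, I would close the argument with the usual telescoping-into-$1/t$ trick. Using the concavity bound \eqref{eq:concave-suboptimality-gap}, $\delta_t \le \sum_j w_j(\mathbf{E}_{\nu^j}[(\psi^*)^j - \psi^j_t] - \mathbf{E}_{\nu^j_{\vecpsi_t}}[(\psi^*)^j - \psi^j_t])$, one relates $\delta_t$ to the decrease of a potential $\Phi_t$ (a weighted KL between $\nu^j$ and $\nu^j_{\vecpsi_t}$, or equivalently $\tau^{-1}$ times something via Lemma~\ref{lemma:suboptimality-to-kl}). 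The standard computation then yields $\delta_{t} - \delta_{t+1} \gtrsim \frac{\min(\lambda,\tau)}{c_\infty(\mathcal{X})^2}\,\delta_t^2$ after using $\|\phi_{\psi^*}\|_{\mathrm{osc}}, \|\phi_{\psi_t}\|_{\mathrm{osc}} \le c_\infty(\mathcal{X})$ to bound the test functions $(\psi^*)^j - \psi^j_t$ (up to translation) by $O(c_\infty(\mathcal{X}))$ — note $\eta\lambda = \min(\lambda,\tau)$, which is precisely where the constant in the theorem comes from. A sequence satisfying $\delta_{t+1} \le \delta_t - c\,\delta_t^2$ with $\delta_0$ bounded obeys $\delta_t \le 1/(c t)$ by the elementary induction $1/\delta_{t+1} \ge 1/\delta_t + c$; plugging in $c = \min(\lambda,\tau)/(2c_\infty(\mathcal{X})^2)$ gives exactly the claimed $\frac{2c_\infty(\mathcal{X})^2}{\min(\lambda,\tau)}\cdot\frac1t$. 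The only mild subtlety in this last stage is handling the translation invariance $(\phi^*,\psi^*)\mapsto(\phi^*+a,\psi^*-a)$ so that the "diameter" terms are genuinely $O(c_\infty)$ and not infinite; this is dispatched by choosing the representative of $\vecpsi^*$ compatibly with the $\phi_\psi$ normalization and invoking \eqref{eq:schroedinger-potentials-bounded}.
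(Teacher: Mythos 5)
Your outer framework matches the paper's proof closely: bound the oscillation norm of the iterates by $c_\infty(\mathcal{X})$ via the damped-average recursion $\psi^j_{t+1}=(1-\eta)\psi^j_t+\eta\psi_{\phi^j_{t+1}}$, combine the concavity bound \eqref{eq:concave-suboptimality-gap} with Pinsker's inequality and Jensen to get $\delta_t^2 \le 2c_\infty(\mathcal{X})^2 \sum_j w_j\,\kl{\nu^j}{\nu^j_t}$, and telescope $1/\delta_{t+1}-1/\delta_t\ge c$. This stage of your argument is correct, including your remark that translation invariance is handled by the normalization of $\phi_\psi$ and that $\eta\lambda=\min(\lambda,\tau)$ is the source of the constant.

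However, the crucial step — the one-step progress inequality — is only gestured at, and your sketch does not match what is actually needed. You propose to split $E(\vecpsi_{t+1})-E(\vecpsi_t)$ into a damped-step linear gain plus an ``$\eta^2$ curvature'' correction, and to control the re-optimization over $\mu$ via a second-order Taylor bound on $\tau\log\int e^{-V/\tau}d\piref$. The paper does neither: by directly plugging the update $\psi^j_{t+1}-\psi^j_t=\eta\lambda\log(d\nu^j/d\nu^j_t)$ into the explicit formula $E(\vecpsi)=\sum_j w_j\mathbf{E}_{\nu^j}[\psi^j]-\tau\log Z_{\vecpsi}$ one gets the \emph{exact} identity $E(\vecpsi_{t+1})-E(\vecpsi_t)=\eta\lambda\sum_j w_j\kl{\nu^j}{\nu^j_t}-\tau\log(Z_{t+1}/Z_t)$, with no quadratic-in-$\eta$ remainder to control; the entire difficulty is then concentrated in proving $\log(Z_{t+1}/Z_t)\le 0$. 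This is Proposition~\ref{prop:one-step-improvement}, and it is established not by a smoothness estimate but by an exact chain of inequalities (Lemma~\ref{lemma:log-Z-ratio-bound} and Lemma~\ref{lemma:zt-ratio}) combining the arithmetic-geometric mean inequality with Jensen's inequality, split into the cases $\tau\ge\lambda$ and $\tau<\lambda$, where the damping $\eta=\min(1,\tau/\lambda)$ makes the exponent $\eta\lambda/\tau\le1$ appear exactly where Jensen needs it. A generic second-order bound on the log-partition function (whose Hessian is a covariance under $\mu_{\vecpsi}$, scaled by $1/\tau$) would not obviously produce the clean sign condition $\log(Z_{t+1}/Z_t)\le 0$, nor the sharp constant $\min(\lambda,\tau)$ in front of the weighted KL; you would instead be left having to relate a variance of $V_{t+1}-V_t$ under $\mu_t$ to $\sum_j w_j\kl{\nu^j}{\nu^j_t}$, which is a nontrivial (and not obviously available) estimate. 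So the gap is concrete: the one-step descent inequality you need is genuinely the technical core of the theorem, and your proposed route for it is neither the paper's nor one that closes easily.
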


Our convergence analysis draws upon the existing analyses of Sinkhorn's
algorithm \cite{altschuler2017near, dvurechensky2018computational}, which in
turn are based on standard proof strategies in smooth convex
optimization (e.g., \cite[Theorem 2.1.14]{nesterov2018lectures}).
Concerning the proof of Theorem~\ref{thm:exact-scheme-convergence},
the main technical contribution of our work lies in the following
proposition proved in Appendix~\ref{sec:proof-of-one-step-improvement-proposition}.

\begin{proposition}
  \label{prop:one-step-improvement}
  Consider the setup of Theorem~\ref{thm:exact-scheme-convergence}.
  Then, for any integer $t \geq 0$ it holds that
  \begin{equation}
    E^{\vecnu,w}_{\lambda, \tau}(\vecpsi_{t+1})
    - E^{\vecnu,w}_{\lambda,\tau}(\vecpsi_{t})
    \geq
    \min\left(\tau, \lambda\right)
    \sum_{j=1}^{k}w_{j}\kl{\nu^{j}}{\nu^{j}_{t}}.
  \end{equation}
\end{proposition}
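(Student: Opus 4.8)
The plan is to lower-bound the one-step improvement $E(\vecpsi_{t+1}) - E(\vecpsi_t)$ by tracking how the dual objective changes along the damped Sinkhorn update. Since the $\vecpsi$-update is the only step that moves the dual iterate, I would first write $E(\vecpsi_{t+1}) - E(\vecpsi_t)$ explicitly using the second line of \eqref{eq:doubly-entropic-dual}. Because $\psi^j_{t+1} = \psi^j_t - \eta\lambda\log(d\nu^j_t/d\nu^j)$, the change in the linear term $\sum_j w_j \mathbf{E}_{\nu^j}[\psi^j]$ is exactly $-\eta\lambda\sum_j w_j \mathbf{E}_{\nu^j}[\log(d\nu^j_t/d\nu^j)] = -\eta\lambda\sum_j w_j \kl{\nu^j}{\nu^j_t}$ (note the sign: this term decreases, and its magnitude is a weighted sum of KL divergences). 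The work then lies in showing that the change in the log-partition term, $-\tau\log\int\exp(-\sum_j \phi_{\psi^j}(x)/\tau)\piref(dx)$, more than compensates: it must increase by at least $(\eta\lambda + \min(\tau,\lambda))\sum_j w_j \kl{\nu^j}{\nu^j_t}$, i.e.\ by at least $2\min(\tau,\lambda)\sum_j w_j\kl{\nu^j}{\nu^j_t}$ when $\eta = \min(1,\tau/\lambda)$ so that $\eta\lambda = \min(\tau,\lambda)$.

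To control the log-partition term I would use convexity of $t\mapsto -\tau\log\int e^{-f_t/\tau}$ type functionals, or more directly a two-sided estimate. Let $V_t = \sum_j w_j \phi^j_t$ where $\phi^j_t = \phi_{\psi^j_t}$, and let $V_{t+1} = \sum_j w_j \phi_{\psi^j_{t+1}}$. A key subproblem is to compare $\phi_{\psi^j_{t+1}}$ with $\phi_{\psi^j_t} + \eta\lambda\log(d\nu^j_t/d\nu^j)$; since $\phi_\psi$ is a soft-min in $\psi$ with temperature $\lambda$, and we shift $\psi^j_t$ by $-\eta\lambda$ times a function, the damping factor $\eta \le 1$ makes the map $\psi \mapsto \psi + \eta\lambda(\cdots)$ contractive enough in the relevant sense. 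Concretely, I would establish the pointwise inequality $\phi_{\psi^j_{t+1}}(x) \ge \phi_{\psi^j_t}(x) + \eta\lambda\log\frac{d\nu^j_t}{d\nu^j}$-weighted-average, using Jensen/Hölder: writing $\phi_{\psi^j_{t+1}}(x) - \phi_{\psi^j_t}(x) = -\lambda\log\mathbf{E}_{y\sim p^j_x}[(d\nu^j_t/d\nu^j)(y)^{-\eta}]$ where $p^j_x(dy) \propto \exp((\psi^j_t(y)-c(x,y))/\lambda)\nu^j(dy)$ is the conditional of the current coupling, and then applying Jensen's inequality to the convex function $u\mapsto u^{-\eta}$ (valid since $\eta\le 1$ — actually $\eta \ge 0$ suffices for convexity but $\eta \le 1$ is what we need later) to get $\phi_{\psi^j_{t+1}}(x) - \phi_{\psi^j_t}(x) \le \eta\lambda\log\mathbf{E}_{p^j_x}[d\nu^j_t/d\nu^j]$, and a matching lower bound in the other direction.

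The main obstacle I anticipate is handling the $\min(\tau,\lambda)$ in the bound, which forces a case split. When $\tau \le \lambda$ we have $\eta = \tau/\lambda$, so $\eta\lambda = \tau = \min(\tau,\lambda)$; here the log-partition increase should be extractable almost for free because the outer KL-penalty term $\tau\kl{\mu}{\piref}$ in the equivalent form $L(\vecpsi,\vecphi,\mu)$ gives a $\tau$-strong-convexity handle. When $\tau > \lambda$ we have $\eta = 1$, so $\eta\lambda = \lambda = \min(\tau,\lambda)$; the damping is trivial and the update is the plain Sinkhorn step, and one should recover the improvement from the entropic-OT side rather than the outer penalty. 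I would unify these by expressing the improvement through the primal measures: using that $\mu_t$ is the exact minimizer in \eqref{eq:maxi-minimization-steps}, the change from $\mu_t$ to $\mu_{t+1}$ contributes a nonnegative $\tau\kl{\mu_{t+1}}{\mu_t}$-type term, while the change in the $\vecphi$ variables, combined with the definition of the marginals $\nu^j_t$ via \eqref{eq:psi-marginals}, produces the $\lambda$-weighted KL terms through the identity $\mathbf{E}_{\mu_t}[\phi_{\psi^j_t} - \phi_{\psi^j_{t+1}}]$ relating to $\kl{\nu^j}{\nu^j_t}$ and $\kl{\nu^j_t}{\nu^j}$. Assembling these pieces and carefully bookkeeping the signs and the factor $\eta$ is where the real effort goes; the rest is Jensen's inequality applied to exponential/logarithmic moment generating functions, exactly as in the classical Sinkhorn convergence proofs referenced in the text.
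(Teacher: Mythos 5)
There is a sign error at the very first step of your decomposition that derails the rest of the plan. With the update $\psi^j_{t+1} = \psi^j_t - \eta\lambda\log\frac{d\nu^j_t}{d\nu^j}$, the change in the linear term is
\begin{align}
  \sum_{j}w_j\,\mathbf{E}_{\nu^j}\bigl[\psi^j_{t+1}-\psi^j_t\bigr]
  = -\eta\lambda\sum_j w_j\,\mathbf{E}_{\nu^j}\Bigl[\log\tfrac{d\nu^j_t}{d\nu^j}\Bigr]
  = +\eta\lambda\sum_j w_j\,\kl{\nu^j}{\nu^j_t},
\end{align}
since $\mathbf{E}_{\nu^j}[\log(d\nu^j_t/d\nu^j)] = -\kl{\nu^j}{\nu^j_t}$. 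You wrote $\mathbf{E}_{\nu^j}[\log(d\nu^j_t/d\nu^j)] = \kl{\nu^j}{\nu^j_t}$ and concluded the linear term \emph{decreases}; in fact it \emph{increases} by exactly $\min(\lambda,\tau)\sum_j w_j\kl{\nu^j}{\nu^j_t}$, which is precisely the right-hand side of the proposition, because $\eta\lambda=\min(\lambda,\tau)$. Consequently your plan of extracting an additional $2\min(\tau,\lambda)\sum_j w_j\kl{\nu^j}{\nu^j_t}$ increase from the log-partition term is both unnecessary and likely false (that term vanishes to second order near the optimum and cannot in general dominate a first-order KL quantity).

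What is actually left to show is only the one-sided inequality $\log(Z_{\vecpsi_{t+1}}/Z_{\vecpsi_t}) \le 0$, so that the log-partition contribution $-\tau\log(Z_{t+1}/Z_t)$ is nonnegative. The paper proves this (Lemma~\ref{lemma:log-Z-ratio-bound} via Lemma~\ref{lemma:zt-ratio}) by writing $Z_{t+1}/Z_t = \mathbf{E}_{\mu_t}\bigl[\prod_j \exp\bigl(\tfrac{\phi^j_t-\phi^j_{t+1}}{\tau}\bigr)^{w_j}\bigr]$, applying AM--GM (and, for $\tau\ge\lambda$, concavity of $x\mapsto x^{\lambda/\tau}$), and then the Jensen step you anticipated — applied to the conditional laws $\rho^j_x \propto \exp((\psi^j_t(y)-c(x,y))/\lambda)\nu^j(dy)$ with the convex map $u\mapsto u^{\lambda/\tau}$ when $\tau<\lambda$ — to reduce to $\mathbf{E}_{\nu^j_t}[d\nu^j/d\nu^j_t]=1$. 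So the technical ingredients you listed (Jensen/Hölder on the conditional kernel, AM--GM, a $\tau\lessgtr\lambda$ case split) are the right ones, but they serve to prove that the partition ratio is $\le 1$, not that it shrinks by a KL amount. Fix the sign, recognize that the linear term already delivers the stated bound, and the remaining goal collapses to the cleaner claim $Z_{t+1}\le Z_t$.
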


With Proposition~\ref{prop:one-step-improvement} at hand, we are ready to prove
Theorem~\ref{thm:exact-scheme-convergence}.

\begin{proof}[Proof of Theorem~\ref{thm:exact-scheme-convergence}]
  Denote $\delta_{t} = E^{\vecnu,w}_{\lambda,\tau}(\vecpsi^{*})
  - E^{\vecnu,w}_{\lambda,\tau}(\vecpsi_{t})$.
  We would like to relate the suboptimality gap $\delta_{t}$ to the increment
  $\delta_{t} - \delta_{t+1}$. To do this, we will first show that the iterates
  $\vecpsi_{t}$ have their oscillation norm bounded uniformly in $t$.
  Indeed, for any $j \in \{1,\dots,k\}$, any $t \geq 1$, and any $y \in
  \mathcal{X}$ we have
  \begin{align}
    \psi^{j}_{t}(y)
    = (1-\eta)\psi^{j}_{t-1}(y) + \eta\psi_{\phi^{j}_{t}}(y).
  \end{align}
  By \eqref{eq:schroedinger-potentials-bounded},
  $\psi_{\phi^{j}_{t}}$ has oscillation norm bounded by
  $c_{\infty}(\mathcal{X})$. Because $\psi^{j}_{0} = 0$ and $\eta \in (0,1]$,
  by induction on $t$ it follows that $\|\psi_{t}\|_{\mathrm{osc}} \leq
  c_{\infty}(\mathcal{X})$ for any $t \geq 0$.
  Combining the bound on the dual sub-optimality gap
  \eqref{eq:concave-suboptimality-gap} with Pinsker's inequality yields
  \begin{equation}
    \delta_{t} \leq 2c_{\infty}(\mathcal{X})\sum_{j=1}^{k}w_{j}\|\nu^{j} -
    \nu_{t}^{j}\|_{\mathrm{TV}}
    \leq \sqrt{2}c_{\infty}\sum_{j=1}^{k}w_{j}\sqrt{\kl{\nu^{j}}{\nu^{j}_{t}}}.
  \end{equation}
  Using concavity of the square root function,
  Proposition~\ref{prop:one-step-improvement}  yields for any $t \geq 0$
  \begin{equation}
    \delta_{t} - \delta_{t+1}
    \geq \min(\lambda,\tau) \sum_{j=1}^{k}
    w_{j}\kl{\nu^{j}}{\nu^{j}_{t}}
    \geq
    \frac{\min(\lambda,\tau)}{2c_{\infty}(\mathcal{X})^{2}} \delta_{t}^{2}.
  \end{equation}
  By Proposition~\ref{prop:one-step-improvement}, the sequence $\delta_{t}$ is
  non-increasing. Hence, dividing the above equality by
  $\delta_{t}\delta_{t+1}$ yields
  \begin{equation}
    \frac{1}{\delta_{t+1}} - \frac{1}{\delta_{t}} \geq
    \frac{\min(\lambda,\tau)}{2c_{\infty}(\mathcal{X})^{2}}.
  \end{equation}
  Telescoping the left hand side completes the proof.
\end{proof}

\section{Approximate Damped Sinkhorn Scheme}
\label{sec:approximate-damped-sinkhorn}

In this section, we extend the analysis of Algorithm~\ref{alg:exact} to an
approximate version of the algorithm. Then, in
Section~\ref{sec:inexact-oracle-implementation}, we describe how inexact
updates may be implemented via approximate random sampling, thus enabling
the computation of $(\lambda,\tau)$-barycenters in the free-support setting
with convergence guarantees.

Algorithm~\ref{alg:inexact} describes an inexact version of
Algorithm~\ref{alg:exact}. It replaces the damped Sinkhorn iterations of
Algorithm~\ref{alg:exact} via approximate updates computed by
an approximate Sinkhorn oracle -- a procedure that satisfies the properties listed
in Definition~\ref{dfn:approximate-sinkhorn-oracle}.

\begin{definition}[Approximate Sinkhorn Oracle]
  \label{dfn:approximate-sinkhorn-oracle}
  An $\varepsilon$-approximate Sinkhorn oracle is a procedure that given
  any $\vecpsi$ and any index $j \in
  \{1, \dots, k\}$, returns a Radon-Nikodym derivative
  $\frac{d\widehat{\nu}^{j}_{\vecpsi}}{d\nu^{j}}$
  of a measure
  $\widehat{\nu}^{j}_{\vecpsi} \ll \nu^{j}$ that satisfies the following
  properties:
  \begin{enumerate}
    \item $\frac{d\widetilde{\nu}^{j}_{\vecpsi}}{d\nu^{j}}$ is strictly positive
      on the support of $\nu^{j}$;
    \item $\|\widetilde{\nu}^{j}_{\vecpsi} - \nu^{j}_{\vecpsi}\|_{\mathrm{TV}} \leq
      \varepsilon/(2c_{\infty}(\mathcal{X}))$;
    \item
      $\mathbf{E}_{Y \sim
      \nu^{j}}[
        \frac{d\nu_{\vecpsi}^{j}}{d\widetilde{\nu}_{\vecpsi}^{j}}(Y)
      ]
      \leq 1 + \varepsilon^{2}/(2c_{\infty}(\mathcal{X})^{2})$;
    \item For any $\eta \in [0,1]$ and any $j \in \{1,\dots,k\}$ it holds that
      $\|\psi^{j}
      + \eta\lambda
      \log(d\widetilde{\nu}^{j}_{\vecpsi}/d\nu^{j})\|_{\mathrm{osc}}
      \leq (1-\eta)\|\psi^{j}\|_{\mathrm{osc}} +
      \eta c_{\infty}(\mathcal{X})$.
  \end{enumerate}
\end{definition}

\begin{algorithm}
  \caption{Approximate Damped Sinkhorn Scheme}
  \label{alg:inexact}
  \KwIn{error tolerance parameter $\varepsilon>0$, a function
  ``ApproximateSinkhornOracle'' satisfying properties listed in
  Definition~\ref{dfn:approximate-sinkhorn-oracle},
  regularization strengths $\lambda,\tau > 0$,
  reference measure $\piref$,number of iterations $T$ and
  $k$ marginal measures $\nu^{1},\dots,\nu^{k}$ with positive weights
  $w_{1}, \dots, w_{k}$ such that $\sum_{j=1}^{k}w_{j} = 1$.}
  \begin{enumerate}
    \item
      Set $\eta=\min(1, \tau/\lambda)$
      and initialize $(\psi^{j}_{0})=0$ for $j \in \{1,\dots,k\}$.
  \item For $t=0,1\dots,T-1$ do
  \begin{enumerate}
    \item
      $\frac{d\widehat{\nu}^{j}_{t}}{d\nu^{j}}(y)
      \leftarrow \mathrm{ApproximateSinkhornOracle}(
      \vecnu, \lambda, \tau, \vecpsi_{t}, \varepsilon, j)$
      for $j \in \{1,\dots,k\}$.
    \item $\psi^{j}_{t+1}(y) \leftarrow \psi^{j}_{t}(y) - \eta\lambda\log
      \frac{d\widehat{\nu}^{j}_{t}}{d\nu^{j}}(y)$
      for $j \in \{1,\dots,k\}$.
    \end{enumerate}
  \item Return $(\phi_{T}^{j}, \psi_{T}^{j})_{j=1}^{k}$.
  \end{enumerate}
\end{algorithm}

The following theorem shows that Algorithm~\ref{alg:inexact}
enjoys the same convergence guarantees as Algorithm~\ref{alg:exact}
up to the error tolerance of the procedure used to implement the approximate
updates. A noteworthy aspect of the below theorem is that the error does not
accumulate over the iterations.

\begin{theorem}
  \label{thm:inexact-scheme-convergence}
  Fix any $\lambda,\tau > 0$ and $\vecnu,w$.
  Let $\psi^{*}$ be the maximizer of dual problem
  $E^{\vecnu, w}_{\lambda,\tau}$.
  Let $(\widetilde{\vecpsi}_{t})_{t \geq 0}$ be the sequence of iterates generated by
  Algorithm~\ref{alg:inexact} with the accuracy parameter $\varepsilon \geq 0$.
  Let $T = \min\{t : E^{\vecnu, w}_{\lambda,\tau}(\vecpsi^{*}) -
  E^{\vecnu, w}_{\lambda,\tau}(\widetilde{\vecpsi}_{t}) \leq 2\varepsilon\}$.
  Then, for any $t \leq T$ it holds that
  \begin{equation}
    E^{\vecnu, w}_{\lambda,\tau}(\vecpsi^{*})
    -
    E^{\vecnu, w}_{\lambda,\tau}(\widetilde{\vecpsi}_{t})
    \leq 2\varepsilon +
    \frac{2c_{\infty}(\mathcal{X})^{2}}{\min(\lambda,\tau)}
    \,\frac{1}{t}.
  \end{equation}
\end{theorem}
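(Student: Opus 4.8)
The plan is to mirror the proof of Theorem~\ref{thm:exact-scheme-convergence}, but to carry along the oracle errors carefully so that they contribute an additive $O(\varepsilon)$ term that does not accumulate. The two ingredients of the exact proof are: (i) a uniform bound on $\|\widetilde\psi^{j}_{t}\|_{\mathrm{osc}}$, and (ii) a one-step improvement bound lower-bounding $\delta_{t}-\delta_{t+1}$ in terms of $\delta_{t}^{2}$. For (i), the uniform oscillation bound $\|\widetilde\psi^{j}_{t}\|_{\mathrm{osc}}\le c_{\infty}(\mathcal{X})$ now follows directly by induction from property~4 of Definition~\ref{dfn:approximate-sinkhorn-oracle}, which is the inexact analogue of \eqref{eq:schroedinger-potentials-bounded}; this part is routine. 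For (ii), I would establish an \emph{approximate} one-step improvement proposition: revisiting the proof of Proposition~\ref{prop:one-step-improvement} with $\widehat\nu^{j}_{t}$ in place of $\nu^{j}_{t}$, the algebra that produces $\min(\tau,\lambda)\sum_j w_j\kl{\nu^j}{\widehat\nu^j_t}$ should instead yield something like
\begin{equation}
  E^{\vecnu,w}_{\lambda,\tau}(\widetilde\vecpsi_{t+1})
  - E^{\vecnu,w}_{\lambda,\tau}(\widetilde\vecpsi_{t})
  \geq \min(\tau,\lambda)\sum_{j=1}^{k}w_{j}\kl{\nu^{j}}{\widehat\nu^{j}_{t}}
  - (\text{error terms controlled by properties 2 and 3}).
\end{equation}
Properties~2 and~3 are precisely tailored so that these error terms are bounded: property~3 controls a $\log\mathbf{E}_{\nu^j}[d\nu^j_{\vecpsi}/d\widetilde\nu^j_{\vecpsi}]$-type quantity by $\varepsilon^2/(2c_\infty^2)$, and property~2 controls a total-variation discrepancy by $\varepsilon/(2c_\infty)$.

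Next I would redo the suboptimality-gap step. The concavity bound \eqref{eq:concave-suboptimality-gap} together with the oscillation bound gives $\delta_{t}\le 2c_{\infty}(\mathcal{X})\sum_j w_j\|\nu^j-\widehat\nu^j_t\|_{\mathrm{TV}} + (\text{a TV error from replacing }\nu^j_{\widetilde\vecpsi_t}\text{ by }\widehat\nu^j_t)$, where the error is $\le\varepsilon$ by property~2 summed against $w$ (using $\sum_j w_j=1$). So $\delta_t-\varepsilon \le 2c_\infty\sum_j w_j\|\nu^j-\widehat\nu^j_t\|_{\mathrm{TV}} \le \sqrt{2}c_\infty\sum_j w_j\sqrt{\kl{\nu^j}{\widehat\nu^j_t}}$ by Pinsker. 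Squaring (using concavity of $\sqrt{\cdot}$ and Jensen) and combining with the approximate one-step improvement gives, for the regime $\delta_t\ge 2\varepsilon$ (so that $\delta_t-\varepsilon\ge\delta_t/2$),
\begin{equation}
  \delta_{t}-\delta_{t+1}
  \;\geq\; \frac{\min(\lambda,\tau)}{2c_{\infty}(\mathcal{X})^{2}}(\delta_{t}-\varepsilon)^{2} - (\text{lower-order }\varepsilon\text{ terms})
  \;\geq\; \frac{\min(\lambda,\tau)}{8c_{\infty}(\mathcal{X})^{2}}\,\delta_{t}^{2},
\end{equation}
where the $\varepsilon$-dependent slack from the one-step improvement proposition is absorbed because, while $t<T$, those terms are dominated by the quadratic term. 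I would then check monotonicity of $\delta_t$ on the range $t\le T$ (it need not be monotone globally, but the definition of $T$ as the first time $\delta_t\le 2\varepsilon$ keeps us in the good regime), divide by $\delta_t\delta_{t+1}$, and telescope exactly as before to get $1/\delta_t \gtrsim \min(\lambda,\tau)t/c_\infty^2$, i.e.\ $\delta_t \le 2c_\infty^2/(\min(\lambda,\tau)t)$ for $t\le T$; adding back the $2\varepsilon$ that we are willing to tolerate yields the stated bound. (Constants above are illustrative; the final write-up would track them to land exactly on $2\varepsilon + 2c_\infty^2/(\min(\lambda,\tau)t)$, possibly by being slightly more careful in how the $\varepsilon$-slack is split between the two terms.)

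The main obstacle is the approximate one-step improvement proposition: I need to reopen the (appendix) proof of Proposition~\ref{prop:one-step-improvement} and identify exactly where $\nu^j_t$ enters, then show that replacing it by $\widehat\nu^j_t$ introduces only errors of the form controlled by properties~2--4. The delicate point is that the exact proof presumably uses the identity $\psi^{j}_{t+1}=(1-\eta)\psi^j_t+\eta\psi_{\phi^j_t}$ and an exact "descent lemma" type computation involving $\kl{\nu^j}{\nu^j_t}$; with the inexact update $\psi^j_{t+1}=\psi^j_t-\eta\lambda\log(d\widehat\nu^j_t/d\nu^j)$ the corresponding telescoping term becomes $\kl{\nu^j}{\widehat\nu^j_t}$ plus a cross term $\mathbf{E}_{\nu^j}[\log(d\nu^j_{\widetilde\vecpsi_t}/d\widehat\nu^j_t)]$ which, by Jensen, is at most $\log\mathbf{E}_{\nu^j}[d\nu^j_{\widetilde\vecpsi_t}/d\widehat\nu^j_t]\le\log(1+\varepsilon^2/(2c_\infty^2))\le\varepsilon^2/(2c_\infty^2)$ — this is exactly what property~3 is for, and recognizing that this is the quantity one must bound is the crux. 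Everything downstream (Pinsker, concavity of $\sqrt{\cdot}$, telescoping) is then a routine perturbation of the exact argument.
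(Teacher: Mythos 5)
Your plan matches the paper's proof essentially step for step: property 4 gives the uniform oscillation bound by induction, property 2 plus Pinsker gives $\tilde\delta_t \le \varepsilon + \sqrt{2}c_\infty\sqrt{\sum_j w_j \kl{\nu^j}{\widetilde\nu^j_t}}$, and property 3 controls the extra $\log\sum_j w_j \mathbf{E}_{\nu^j}[d\nu^j_t/d\widetilde\nu^j_t]$ term that Lemma~\ref{lemma:log-Z-ratio-bound} produces when applied with $\Delta^j_t = d\nu^j/d\widetilde\nu^j_t$ — you don't even need to ``reopen'' Proposition~\ref{prop:one-step-improvement}, since that lemma is already stated for arbitrary positive $\Delta^j_t$. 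The one refinement you'd need to land on the exact constant is to avoid the lossy bound $\delta_t-\varepsilon\ge\delta_t/2$ and instead use $(\tilde\delta_t-\varepsilon)^2-\varepsilon^2=\tilde\delta_t(\tilde\delta_t-2\varepsilon)\ge(\tilde\delta_t-2\varepsilon)^2$ in the regime $\tilde\delta_t\ge 2\varepsilon$, then telescope $1/(\tilde\delta_t-2\varepsilon)$ directly.
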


The proof of the above theorem can be found in
Appendix~\ref{sec:inexact-algorithm-convergence-proof}.

\subsection{Implementing the Approximate Sinkhorn Oracle}
\label{sec:inexact-oracle-implementation}
In this section, we show that the approximate Sinkhorn oracle (see
Definition~\ref{dfn:approximate-sinkhorn-oracle}) can be implemented using
approximate random sampling when the marginal distributions $\nu^{j}$ are
discrete. To this end, fix the regularization parameters $\lambda, \tau > 0$,
the weight vector $w$, and consider a set of $k$ discrete marginal distributions
$$
  \nu^{j} = \sum_{l=1}^{m_{j}}\nu^{j}(y^{j}_{l})\delta_{y^{j}_{l}},
$$
where $\delta_{x}$ is the Dirac measure located at $x$ and $\nu^{j}(y^{j}_{l})$
is equal to the probability of sampling the point $y^{j}_{l}$ from measure
$\nu^{j}$.
We denote the total cardinality of the support of all measures $\nu^{j}$ by
$$
  m = \sum_{j=1}^{m}m_{j}.
$$
Fix any $\vecpsi \in L_{1}(\vecnu)$. Suppose we are given access to
$n$ i.i.d.\ samples $X_{1},\dots,X_{n}$ from a probability measure
$\mu'_{\vecpsi}$ that satisfies
\begin{equation}
  \|\mu_{\vecpsi} - \mu_{\vecpsi}'\|_{\mathrm{TV}} \leq \varepsilon_{\mu}.
\end{equation}
Then, for $j=1,\dots,k$ and $l=1,\dots,m_{j}$ consider
\begin{equation}
  \widehat{\nu}^{j}(y^{j}_{i})
  =
  \nu^{j}(y^{j}_{i})\frac{1}{n}
  \sum_{i=1}^{n}\exp\left(
    \frac{\phi_{\psi^{j}}(X_{i}) + \psi^{j}(y) - c(x,y)}{\lambda}\right)
\end{equation}
and for any parameter $\zeta \in (0,1/2]$ define
\begin{equation}
  \label{eq:approximate-oracle}
  \widetilde{\nu}^{j} = (1-\zeta)\widehat{\nu}^{j} + \zeta\nu^{j}.
\end{equation}
We claim that $\widetilde{\nu}^{j}$ implements the approximate Sinkhorn oracle
with accuracy parameter arbitrarily close to $\sqrt{\varepsilon_{\mu}}$
provided that $n$ is large enough.
This is shown in the following lemma, the proof of which can be found in
Appendix~\ref{sec:proof-of-approximate-oracle-implementation}.

\begin{lemma}
  \label{lemma:approximate-oracle-implementation}
  Fix any $\delta \in (0,1)$ and consider the setup described above.
  With probability at least $1-\delta$,
  for each $j \in \{1,\dots,k\}$ it holds simultaneously that
  the measure
  $\widetilde{\nu}^{j}$ defined in \eqref{eq:approximate-oracle}
  satisfies all the properties listed in
  Definition~\ref{dfn:approximate-sinkhorn-oracle}
  with accuracy parameter
  $$
    \varepsilon_{j} \leq
    c_{\infty}(\mathcal{X})
    \Bigg(
      2\zeta
      + \frac{1}{\zeta}
      m_{j}\varepsilon_{\mu}
      +
      \frac{1}{\zeta}
      m_{j}
        \sqrt{\frac{2\log\left(\frac{2m}{\delta}\right)}{n}}
    \Bigg)^{1/2}.
  $$
\end{lemma}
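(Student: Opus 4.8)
The plan is to condition on a single high‑probability event obtained by a union bound over all $m=\sum_{j}m_{j}$ atoms of $\nu^{1},\dots,\nu^{k}$, and then verify the four requirements of Definition~\ref{dfn:approximate-sinkhorn-oracle} one at a time. Throughout, abbreviate $g^{j}_{y}(x)=\exp\big((\phi_{\psi^{j}}(x)+\psi^{j}(y)-c(x,y))/\lambda\big)$, so that at each atom $y^{j}_{l}$ of $\nu^{j}$ we have $\frac{d\nu^{j}_{\vecpsi}}{d\nu^{j}}(y^{j}_{l})=\mathbf{E}_{\mu_{\vecpsi}}[g^{j}_{y^{j}_{l}}]$ by \eqref{eq:psi-marginals}, $\frac{d\widehat{\nu}^{j}}{d\nu^{j}}(y^{j}_{l})=\frac1n\sum_{i=1}^{n}g^{j}_{y^{j}_{l}}(X_{i})$ by the definition of $\widehat{\nu}^{j}$, and $\frac{d\widetilde{\nu}^{j}}{d\nu^{j}}(y^{j}_{l})=(1-\zeta)\frac1n\sum_{i=1}^{n}g^{j}_{y^{j}_{l}}(X_{i})+\zeta$ by \eqref{eq:approximate-oracle}. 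Two deterministic facts are used repeatedly. First, the definition of $\phi_{\psi^{j}}$ gives $\int_{\mathcal{X}}g^{j}_{y}(x)\,\nu^{j}(dy)=1$ for every $x$; hence $\widehat{\nu}^{j}$ and $\widetilde{\nu}^{j}$ are probability measures, and writing $A_{l}:=\frac{d\nu^{j}_{\vecpsi}}{d\nu^{j}}(y^{j}_{l})$, $\widehat{A}_{l}:=\frac{d\widehat{\nu}^{j}}{d\nu^{j}}(y^{j}_{l})$ and $p_{l}:=\nu^{j}(\{y^{j}_{l}\})$, one gets $\sum_{l}p_{l}A_{l}=\sum_{l}p_{l}\widehat{A}_{l}=1$, and therefore $\sum_{l}p_{l}(A_{l}-\widehat{A}_{l})=0$. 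Second, keeping only the term $y'=y$ in the integral defining $\phi_{\psi^{j}}$ yields $g^{j}_{y}(x)\le 1/\nu^{j}(\{y\})$ at every atom $y$. Property~1 is then immediate, since $\frac{d\widetilde{\nu}^{j}}{d\nu^{j}}(y^{j}_{l})\ge\zeta>0$.

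Next I would prove the concentration estimate. Fix $j,l$; the variables $g^{j}_{y^{j}_{l}}(X_{1}),\dots,g^{j}_{y^{j}_{l}}(X_{n})$ are i.i.d., take values in $[0,1/p_{l}]$, and have mean $\mathbf{E}_{\mu'_{\vecpsi}}[g^{j}_{y^{j}_{l}}]$; Hoeffding's inequality and a union bound over all $m$ atoms give, with probability at least $1-\delta$, simultaneously $|\widehat{A}_{l}-\mathbf{E}_{\mu'_{\vecpsi}}[g^{j}_{y^{j}_{l}}]|\le p_{l}^{-1}\sqrt{\log(2m/\delta)/(2n)}$. Combining with the bias bound $|\mathbf{E}_{\mu'_{\vecpsi}}[g^{j}_{y^{j}_{l}}]-A_{l}|\le\|g^{j}_{y^{j}_{l}}\|_{\mathrm{osc}}\,\|\mu'_{\vecpsi}-\mu_{\vecpsi}\|_{\mathrm{TV}}\le\varepsilon_{\mu}/p_{l}$ (using $\|g^{j}_{y^{j}_{l}}\|_{\mathrm{osc}}\le\|g^{j}_{y^{j}_{l}}\|_{\infty}\le 1/p_{l}$), I get on this event $p_{l}|A_{l}-\widehat{A}_{l}|\le e$ for all $j,l$, where $e:=\varepsilon_{\mu}+\sqrt{\log(2m/\delta)/(2n)}$, hence $\sum_{l=1}^{m_{j}}p_{l}|A_{l}-\widehat{A}_{l}|\le m_{j}e\le m_{j}\varepsilon_{\mu}+m_{j}\sqrt{2\log(2m/\delta)/n}$. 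Property~2 then follows from $\|\widetilde{\nu}^{j}-\nu^{j}_{\vecpsi}\|_{\mathrm{TV}}\le\|\widehat{\nu}^{j}-\nu^{j}_{\vecpsi}\|_{\mathrm{TV}}+\zeta\|\nu^{j}-\nu^{j}_{\vecpsi}\|_{\mathrm{TV}}\le\tfrac12 m_{j}e+\zeta$, which one compares against $\varepsilon_{j}/(2c_{\infty}(\mathcal{X}))$ by a routine case split (distinguishing whether $\varepsilon_{j}/(2c_{\infty}(\mathcal{X}))$ already exceeds the trivial bound $1$ on the total variation, and otherwise using $\zeta\le 1/2$).

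For Property~3, decompose $A_{l}=\widehat{A}_{l}+(A_{l}-\widehat{A}_{l})$ in $\mathbf{E}_{\nu^{j}}\big[\tfrac{d\nu^{j}_{\vecpsi}}{d\widetilde{\nu}^{j}}\big]=\sum_{l}p_{l}\,\tfrac{A_{l}}{(1-\zeta)\widehat{A}_{l}+\zeta}$. The map $x\mapsto x/((1-\zeta)x+\zeta)$ is concave on $[0,\infty)$ and $\sum_{l}p_{l}\widehat{A}_{l}=1$, so Jensen's inequality gives $\sum_{l}p_{l}\,\tfrac{\widehat{A}_{l}}{(1-\zeta)\widehat{A}_{l}+\zeta}\le 1$. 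For the remainder, $\tfrac{A_{l}-\widehat{A}_{l}}{(1-\zeta)\widehat{A}_{l}+\zeta}\le\tfrac{(A_{l}-\widehat{A}_{l})_{+}}{\zeta}$ because the denominator is at least $\zeta$; and since $\sum_{l}p_{l}(A_{l}-\widehat{A}_{l})=0$, the positive and negative parts carry equal $p$‑weighted mass, whence $\sum_{l}p_{l}(A_{l}-\widehat{A}_{l})_{+}=\tfrac12\sum_{l}p_{l}|A_{l}-\widehat{A}_{l}|\le\tfrac12 m_{j}e$. Altogether $\mathbf{E}_{\nu^{j}}\big[\tfrac{d\nu^{j}_{\vecpsi}}{d\widetilde{\nu}^{j}}\big]\le 1+\tfrac{m_{j}e}{2\zeta}\le 1+\varepsilon_{j}^{2}/(2c_{\infty}(\mathcal{X})^{2})$, the last step absorbing the $\zeta$‑term and the gap between $e$ and $\varepsilon_{\mu}+\sqrt{2\log(2m/\delta)/n}$; note it is exactly the identity $\sum_{l}p_{l}(A_{l}-\widehat{A}_{l})=0$ that halves the error and makes the stated constant come out.

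The remaining requirement, Property~4, is where I expect the real work. Since $\psi^{j}+\eta\lambda\log\tfrac{d\widetilde{\nu}^{j}}{d\nu^{j}}=(1-\eta)\psi^{j}+\eta\big(\psi^{j}+\lambda\log\tfrac{d\widetilde{\nu}^{j}}{d\nu^{j}}\big)$ and $\|\cdot\|_{\mathrm{osc}}$ is a seminorm, it suffices to treat $\eta=1$, i.e.\ to bound $\big\|\psi^{j}+\lambda\log\tfrac{d\widetilde{\nu}^{j}}{d\nu^{j}}\big\|_{\mathrm{osc}}$ by $c_{\infty}(\mathcal{X})$ (with the sign convention of Definition~\ref{dfn:approximate-sinkhorn-oracle}). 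Writing $\overline{h}_{y}:=\frac1n\sum_{i}\exp((\phi_{\psi^{j}}(X_{i})-c(X_{i},y))/\lambda)$, so that $\tfrac{d\widehat{\nu}^{j}}{d\nu^{j}}(y)=e^{\psi^{j}(y)/\lambda}\overline{h}_{y}$, repeating the argument behind \eqref{eq:schroedinger-potentials-bounded} with the roles of $x$ and $y$ (and of $\psi$, $\phi_{\psi^{j}}$, and of $\nu^{j}$ and the empirical measure $\tfrac1n\sum_{i}\delta_{X_{i}}$) interchanged gives $\overline{h}_{y}/\overline{h}_{y'}\in[e^{-c_{\infty}(\mathcal{X})/\lambda},e^{c_{\infty}(\mathcal{X})/\lambda}]$ for all atoms $y,y'$. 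Using this together with $\|\psi^{j}\|_{\mathrm{osc}}\le c_{\infty}(\mathcal{X})$ — which holds for every iterate produced by Algorithm~\ref{alg:inexact}, by the same induction as in the proof of Theorem~\ref{thm:exact-scheme-convergence}, so we may assume it — one checks that for any two atoms $y,y'$
\[
  \frac{(1-\zeta)e^{\psi^{j}(y)/\lambda}\overline{h}_{y}+\zeta}
       {(1-\zeta)e^{\psi^{j}(y')/\lambda}\overline{h}_{y'}+\zeta}
  \ \ge\ e^{(\psi^{j}(y)-\psi^{j}(y')-c_{\infty}(\mathcal{X}))/\lambda},
\]
the crux being that the numerator's first summand already dominates $e^{(\psi^{j}(y)-\psi^{j}(y')-c_{\infty}(\mathcal{X}))/\lambda}$ times the denominator's first summand, while $\zeta\ge e^{(\psi^{j}(y)-\psi^{j}(y')-c_{\infty}(\mathcal{X}))/\lambda}\zeta$ because the exponent is non‑positive. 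Taking logarithms, multiplying by $\lambda$ and rearranging yields the desired oscillation bound; the general‑$\eta$ case follows by convexity as above. This verifies all four properties on the probability‑$(1-\delta)$ event. The \emph{main obstacle} is precisely this last step: the regularization by $\zeta\nu^{j}$ in \eqref{eq:approximate-oracle}, which trivializes Property~1, a priori inflates the oscillation of the updated potential by as much as $\sim\lambda\log(1/\zeta)$, and crude $\log$‑of‑a‑sum estimates are too lossy to recover $c_{\infty}(\mathcal{X})$; the sign‑aware comparison above sidesteps this, but only because the oracle is applied to dual variables whose oscillation is already at most $c_{\infty}(\mathcal{X})$.
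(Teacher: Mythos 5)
Your proof follows the same overall plan as the paper: Hoeffding plus a union bound over all $m$ atoms, then verification of the four oracle properties. Two places are genuinely different and worth flagging. For Property~3, the paper writes $\frac{d\nu^{j}_{\vecpsi}}{d\widetilde{\nu}^{j}} = \frac{d\widehat{\nu}^{j}}{d\widetilde{\nu}^{j}} + \frac{d\nu^{j}_{\vecpsi}-d\widehat{\nu}^{j}}{d\widetilde{\nu}^{j}}$ and simply bounds the first ratio by $1/(1-\zeta)\le 1+2\zeta$ and the second by $\zeta^{-1}\|\nu^{j}_{\vecpsi}-\widehat{\nu}^{j}\|_{\mathrm{TV}}$, giving $1+2\zeta+\zeta^{-1}(\cdots)$. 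You instead apply Jensen to the concave map $x\mapsto x/((1-\zeta)x+\zeta)$ (using $\sum_{l}p_{l}\widehat{A}_{l}=1$) to kill the $2\zeta$ contribution of the first term, and then exploit the exact identity $\sum_{l}p_{l}(A_{l}-\widehat{A}_{l})=0$ to replace $\sum p_{l}|A_{l}-\widehat{A}_{l}|$ by $2\sum p_{l}(A_{l}-\widehat{A}_{l})_{+}$, halving the error contribution. This is tighter than the paper's estimate (the paper's bound $1+2\zeta+\zeta^{-1}(\cdots)$ actually exceeds the target $1+\varepsilon_{j}^{2}/(2c_{\infty}^{2})$ by a factor roughly $2$, whereas yours lands under it), so the sum-to-zero observation is a nice catch. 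For Property~4, your direct two-term comparison of numerator and denominator is equivalent in content to the paper's monotonicity argument for $a\mapsto(a+\alpha s)/(a+s)$; you also make explicit the assumption $\|\psi^{j}\|_{\mathrm{osc}}\le c_{\infty}(\mathcal{X})$ that the $\zeta$-regularization forces on this step. The paper leaves this implicit (indeed its final case split as written trivially always takes the first branch, so the dependence on $\|\psi^{j}\|_{\mathrm{osc}}\le c_{\infty}$ is hidden), and you correctly resolve the apparent circularity by noting the oracle is only ever queried at iterates of Algorithm~\ref{alg:inexact}, which the joint induction keeps in the ball $\|\psi^{j}\|_{\mathrm{osc}}\le c_{\infty}(\mathcal{X})$. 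The one place you are as loose as the paper is the reconciliation of the Property~2 bound $\|\widetilde{\nu}^{j}-\nu^{j}_{\vecpsi}\|_{\mathrm{TV}}\lesssim\zeta+m_{j}e$ with the stated $\varepsilon_{j}/(2c_{\infty})$, which you defer to a "routine case split"; the paper likewise only remarks that the bound can be made arbitrarily small. Overall, correct and modestly sharper than the source.
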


The above lemma shows that a step of Algorithm~\ref{alg:inexact} can be implemented
  provided access to i.i.d.\ sampling from some measure $\mu_{\vecpsi}'$ close
  to $\mu_{\vecpsi}$ in total variation norm, where $\vecpsi$ is an arbitrary
  iterate of Algorithm~\ref{alg:inexact}. The remainder of this section is
  dedicated to showing that this can be achieved by sampling via Langevin Monte
Carlo.

Henceforth, fix $\piref$ to be the Lebesgue measure on $\mathcal{X}$, which
corresponds to the free-support barycenters setup.
Then, for any $\vecpsi$ we have
$$
  \mu_{\vecpsi}(dx)
  \propto
  \mathbb{1}_{\mathcal{X}}
  \exp(-V_{\vecpsi}(x)/\tau)dx,
  \quad\text{where}\quad V_{\vecpsi}(x) =
  \sum_{j=1}^{k}w_{j}\phi^{j}_{\psi^{j}},
$$
where $\mathbb{1}_{\mathcal{X}}$ is equal to one on $\mathcal{X}$
and zero everywhere else.
It follows by \eqref{eq:schroedinger-potentials-bounded}
that $\|V_{\vecpsi}\|_{\mathrm{osc}} \leq c_{\infty}(\mathcal{X})/\tau$.
Further, let $\mathrm{diam}{\mathcal{X}} = \sup_{x,x' \in \mathcal{X}} \|x - x'\|_{2}$.
By the convexity of $\mathcal{X}$, the uniform measure on $\mathcal{X}$
satisfies the logarithmic Sobolev inequality (LSI) with constant
$\mathrm{diam}(\mathcal{X})^{2}/4$ (cf.\ \cite{lehec2021langevin}).
Hence, by the Holley-Stroock perturbation argument
\cite{holley1986logarithmic}, the measure $\mu_{\vecpsi}$ satisfies LSI with
constant at most
$\exp\left(2c_{\infty}(\mathcal{X})/\tau\right)\mathrm{diam}(\mathcal{X})^{2}/4
< \infty$.

It is well-established that Langevin Monte Carlo algorithms offer convergence
guarantees for approximate sampling from a target measure subject to functional
inequality constraints provided additional conditions hold such as
the smoothness of the function $V_{\vecpsi}$.
However, such guarantees do not directly apply to the measure $\mu_{\vecpsi}$
due to its constrained support.
Instead, it is possible to approximate $\mu_{\vecpsi}$
arbitrarily well in total variation norm by a family of measures
$(\mu_{\vecpsi,\sigma})_{\sigma > 0}$ (see Appendix~\ref{sec:langevin-sampling-guarantees} for details)
supported on all of $\mathbb{R}^{d}$.
Tuning the parameter $\sigma$ allows us to
trade-off between the approximation quality of $\mu_{\vecpsi,\sigma}$ and its
LSI constant. Crucially, standard sampling guarantees for Langevin Monte Carlo
(e.g., \cite{vempala2019rapid}) apply to the regularized measures
$\mu_{\vecpsi,\sigma}$, which leads to provable guarantees for an
implementation of Algorithm~\ref{alg:inexact}, thus furnishing
the first convergence guarantees for computation of Wasserstein barycenters
in the free support setup; see
Theorem~\ref{thm:inexact-algorithm-implementation} stated below.

The above approximation argument applies to any cost function $c$ that is Lipschitz on
$\mathcal{X}$ and exhibits quadratic growth at infinity. For the sake of
simplicity, we consider the quadratic cost $c(x,y) = \|x-y\|_{2}^{2}$.
The exact problem setup where we are able to obtain computational guarantees
for free-support barycenter computation via Langevin Sampling is formalized
below.

\begin{setup}
  \label{setup:ball-and-squared-loss}
  Consider the setting described at the beginning of
  Section~\ref{sec:inexact-oracle-implementation}. In addition, suppose that
  \begin{enumerate}
    \item the reference measure $\piref(dx) = \mathbb{1}_{\mathcal{X}}dx$
      is the Lebesgue measure supported on $\mathcal{X}$
      (free-support setup);
    \item it holds that
      $\mathcal{X} \subseteq \mathcal{B}_{R} = \{ x \in \mathbb{R}^{d} : \|x\|_{2} \leq R\}$ for some constant $R < \infty$;
    \item the cost function
      $c: \mathbb{R}^{d} \times \mathbb{R}^{d} \to [0, \infty)$ is defined
      by $c(x,y) = \|x - y\|_{2}^{2}$;
    \item for any $\vecpsi$ we have
      access to a stationary point $x_{\vecpsi}$ of $V_{\vecpsi}$ over $\mathcal{X}$.
  \end{enumerate}
\end{setup}
The last condition can be implemented in polynomial time
using a first order gradient method. For our purposes, this condition is needed
to obtain a good initialization point for the Unadjusted Langevin Algorithm
following the explanation in \cite[Lemma 1]{vempala2019rapid}; see
Appendix~\ref{sec:langevin-sampling-guarantees} for further details.

We now proceed to the main result of this section, the proof of which can be
found in Appendix~\ref{sec:langevin-sampling-guarantees}.
The following theorem provides the first provably convergent method for
computing Wasserstein barycenters in the free-support setting.
We remark that a stochastic approximation argument of a rather different flavor
used to
compute fixed-support Wasserstein barycenters (for $\tau \geq \lambda$) has
been previously analyzed in \cite{ballu2020stochastic}.

\begin{theorem}
  \label{thm:inexact-algorithm-implementation}
  Consider the setup described in Problem Setting~\ref{setup:ball-and-squared-loss}.
  Then, for any confidence parameter $\delta \in (0,1)$ and any
  accuracy parameter $\varepsilon > 0$, we can simulate a step
  of Algorithm~\ref{alg:inexact} with success probability at least $1-\delta$
  in time polynomial in
  \begin{equation}
    \varepsilon^{-1}, d, R, \exp(R^{2}/\tau),
    (Rd^{-1/4})^{d}, \tau^{-1}, \lambda^{-1}, d, m, \log(m/\delta).
  \end{equation}
  In particular, an $\varepsilon$-approximation of the
  $(\lambda, \tau)$-Barycenter can be obtained within the same computational
  complexity.
\end{theorem}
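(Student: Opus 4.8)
The plan is to unpack what it means to "simulate a step of Algorithm~\ref{alg:inexact}" and to show each constituent piece runs in the claimed time. A single step of Algorithm~\ref{alg:inexact} at iterate $\vecpsi_t$ requires invoking the approximate Sinkhorn oracle for each $j$, and by Lemma~\ref{lemma:approximate-oracle-implementation} this oracle is realized by the estimator $\widetilde\nu^j = (1-\zeta)\widehat\nu^j + \zeta\nu^j$ built from $n$ i.i.d.\ samples $X_1,\dots,X_n$ drawn from a measure $\mu'_{\vecpsi_t}$ with $\|\mu_{\vecpsi_t}-\mu'_{\vecpsi_t}\|_{\mathrm{TV}} \le \varepsilon_\mu$. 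So the task reduces to two subtasks: (i) producing such approximate samples, and (ii) bookkeeping the parameters $\zeta, n, \varepsilon_\mu$ so that the resulting accuracy $\varepsilon_j$ from Lemma~\ref{lemma:approximate-oracle-implementation} is at most the target $\varepsilon$, while tracking the cost of everything.

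First I would fix the parameter budget. Lemma~\ref{lemma:approximate-oracle-implementation} gives $\varepsilon_j \le c_\infty(\mathcal X)\big(2\zeta + \zeta^{-1}m_j\varepsilon_\mu + \zeta^{-1}m_j\sqrt{2\log(2m/\delta)/n}\big)^{1/2}$; note $c_\infty(\mathcal X) = \sup c \le (2R)^2 = 4R^2$ under Problem Setting~\ref{setup:ball-and-squared-loss}. To make the bracket $\lesssim \varepsilon^2/R^4$ it suffices to take $\zeta \asymp \varepsilon^2/R^4$, then $\varepsilon_\mu \lesssim \zeta^2/(m_j R^4)$, and $n \gtrsim m_j^2 \zeta^{-2} \varepsilon^{-4} R^{16}\log(2m/\delta)$ — all polynomial in $\varepsilon^{-1}, R, m, \tau^{-1},\log(m/\delta)$ (the $\tau$ enters because $c_\infty/\tau$ appears in the oscillation bounds but, being inside a $\log$-free polynomial here, only contributes polynomially). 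Evaluating each $\widehat\nu^j(y^j_l)$ costs $O(n)$ exponential evaluations and there are $m$ support points total, so the post-processing is polynomial in $n$ and $m$.

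The substance is subtask (i): generating approximate samples from $\mu_{\vecpsi}(dx) \propto \mathbb 1_{\mathcal X}\exp(-V_{\vecpsi}(x)/\tau)\,dx$. Here I would follow the route sketched in the text: $\mu_{\vecpsi}$ is compactly supported, so Langevin guarantees do not apply directly; instead approximate it in total variation by the mollified family $\mu_{\vecpsi,\sigma}$ supported on all of $\mathbb R^d$ (Appendix~\ref{sec:langevin-sampling-guarantees}). By the Holley--Stroock argument the target satisfies an LSI with constant $\exp(2c_\infty(\mathcal X)/\tau)\,\mathrm{diam}(\mathcal X)^2/4 \lesssim \exp(8R^2/\tau)R^2$, which explains the $\exp(R^2/\tau)$ factor. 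One chooses $\sigma$ small enough that $\|\mu_{\vecpsi}-\mu_{\vecpsi,\sigma}\|_{\mathrm{TV}} \le \varepsilon_\mu/2$; then $\mu_{\vecpsi,\sigma}$ has a bounded LSI constant and a smooth (Lipschitz-gradient) potential, so the Unadjusted Langevin Algorithm of \cite{vempala2019rapid} produces a sample within $\varepsilon_\mu/2$ of $\mu_{\vecpsi,\sigma}$ in KL (hence, via Pinsker, in TV) after a number of steps polynomial in the LSI constant, the smoothness constant, $d$, $\varepsilon_\mu^{-1}$, and $\log$ of the initialization error. The stationary point $x_{\vecpsi}$ from condition (4) of the setup, computable in polynomial time by a first-order method, supplies the warm start needed to control the latter (as in \cite[Lemma~1]{vempala2019rapid}); the gradient $\nabla V_{\vecpsi}(x) = \sum_j w_j \nabla_x \phi^j_{\psi^j}(x)$ is itself an integral against $\nu^j$, evaluable in $O(m)$ exponential evaluations per query. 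The factors $(Rd^{-1/4})^d$ and the various powers of $d$, $\lambda^{-1}$ enter through the smoothness/LSI constants of the mollified measure and the normalization bounds; I would just track them through the ULA complexity expression. Finally, since all of these are per-step with success probability $1-\delta$, and Theorem~\ref{thm:inexact-scheme-convergence} shows $T = O\big(c_\infty(\mathcal X)^2/(\min(\lambda,\tau)\varepsilon)\big)$ steps suffice to reach dual suboptimality $\le 2\varepsilon$ with no error accumulation, a union bound over the $T$ steps (rescaling $\delta \mapsto \delta/T$, which only costs an extra $\log T$ polynomially) combined with Lemma~\ref{lemma:suboptimality-to-kl} (translating dual gap to a KL bound on the barycenter) yields the $\varepsilon$-approximate $(\lambda,\tau)$-barycenter within the stated complexity.

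The main obstacle I expect is not conceptual but the careful propagation of constants through the mollification-plus-Langevin pipeline: one must verify that the smoothness constant of the potential defining $\mu_{\vecpsi,\sigma}$, its LSI constant, and the normalizing-constant bounds needed by \cite{vempala2019rapid} are all simultaneously controlled by polynomials in the listed quantities, and in particular that shrinking $\sigma$ to get TV-closeness does not blow the smoothness constant super-polynomially — this is exactly the trade-off the text flags, and pinning down the precise exponents (which is why the theorem states "polynomial in" rather than an explicit rate) is the delicate part; everything else is routine bookkeeping and union bounds.
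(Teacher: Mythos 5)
Your proposal is correct and follows essentially the same route as the paper: reduce to approximate sampling from $\mu_{\vecpsi}$ via Lemma~\ref{lemma:approximate-oracle-implementation}, mollify to $\mu_{\vecpsi,\sigma}$ and invoke Theorem~\ref{thm:langevin} with the LSI/smoothness bounds of Lemma~\ref{lemma:properties-of-mu-sigma}, then combine by a TV triangle inequality. (One small mislocation: the $(Rd^{-1/4})^d$ factor comes from the total-variation bound between $\mu_{\vecpsi}$ and $\mu_{\vecpsi,\sigma}$ in Lemma~\ref{lemma:properties-of-mu-sigma}, not from the smoothness or LSI constants, and the paper's parameter choice $\zeta=\varepsilon/4$ differs from your $\zeta\asymp\varepsilon^{2}/R^{4}$, but both are only bookkeeping differences yielding the same polynomial conclusion.)
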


Comparing the above guarantee with the discussion following the statement of
Lemma~\ref{lemma:approximate-oracle-implementation},
we see an additional polynomial dependence on $(Rd^{-1/4})^{d}$ (note that for
$R \leq d^{1/4}$ this term disappears). We believe this term to be an
artefact of our analysis appearing due to the approximation argument described
above. Considering the setup with $R \leq d^{1/4}$, the
running time of our algorithm depends exponentially in $R^{2}/\tau$.

We conclude with two observations. First, since approximating
Wasserstein barycenters is generally NP-hard \cite{altschuler2022wasserstein},
an algorithm with polynomial dependence on all problem parameters does not
exist if $\mathrm{P}\neq\mathrm{NP}$. Second, notice that
computing an $\varepsilon$ approximation of $(\lambda,\tau)$-Barycenter can be
done in time polynomial in $\varepsilon^{-1}$.
This should be contrasted with numerical schemes based on discretizations of the
set $\mathcal{X}$, which would, in general, result in computational complexity
of order $(R/\varepsilon)^{d}$ to reach the same accuracy.

\section{Conclusion}
\label{sec:conclusion}

We introduced algorithms to compute doubly regularized entropic
Wasserstein barycenters and studied their computational complexity, both in the
fixed-support and in the free-support settings. Although a naive adaptation of
the usual alternate maximization scheme from~\cite{benamou2015iterative} to our
setting leads to diverging iterates (at least for small values of $\tau$), our
analysis shows that it is sufficient to damp these iterations to get a
converging algorithm.

While we have focused on the problem of barycenters of measures, we note that
the idea of entropic regularization is pervasive in other applications of
optimal transport. There, the flexibility offered by the double entropic
regularization may prove to be useful as well, and we believe that our damped
algorithm could be adapted to these more general settings.

\bibliographystyle{abbrvnat}
\bibliography{references}

\appendix
\clearpage

\section{Proof of Lemma~\ref{lemma:suboptimality-to-kl}}
\label{sec:proof-of-suboptimality-to-kl-lemma}
To simplify the notation throughout this proof, for each $j \in \{1,\dots,k\}$
denote $\phi^{j} = \phi_{\psi^{j}}$. We have
\begin{align}
  \label{eq:suboptimality-to-kl-proof-first-equation}
  E^{\vecnu,w}_{\lambda, \tau}(\vecpsi^{*})
  - E^{\vecnu,w}_{\lambda,\tau}(\vecpsi)
  =
  \sum_{j=1}^{k}w_{j}
  \mathbf{E}_{Y \sim \nu^{j}}\left[(\psi^{*})^{j}(Y) -
  \psi^{j}(Y)\right]
  - \tau
  \log
  \frac{Z_{\vecpsi^{*}}}{Z_{\vecpsi}}.
\end{align}
Observe that for any $x \in \mathcal{X}$ it holds that
\begin{align}
  \frac{d\mu_{\vecpsi}}{d\mu_{\vecpsi^{*}}}(x)
  =
  \frac{Z_{\vecpsi^{*}}}{Z_{\vecpsi}}
  \exp\left(
    -\frac{
      \sum_{j=1}^{k}w_{j}(\phi^{j}(x) - (\phi^{*})^{j}(x))
    }{\tau}
  \right).
\end{align}
Hence,
\begin{align}
  \tau\log
  \frac{Z_{\vecpsi^{*}}}{Z_{\vecpsi}}
  &=
  \tau\log\mathbf{E}_{X \sim \mu_{\vecpsi^{*}}}\left[
     \frac{Z_{\vecpsi^{*}}}{Z_{\vecpsi}}
  \right]
  \\
  &=
  \tau\log\mathbf{E}_{X \sim \mu_{\vecpsi^{*}}}\left[
    \frac{d\mu_{\vecpsi}}{d\mu_{\vecpsi^{*}}}(x)
    \exp\left(
      \frac{
        \sum_{j=1}^{k}w_{j}(\phi^{j}(x) - (\phi^{*})^{j}(x))
      }{\tau}
    \right)
  \right]
  \\
  &=
  \tau\log\mathbf{E}_{X \sim \mu_{\vecpsi}}\left[
    \exp\left(
      \frac{
        \sum_{j=1}^{k}w_{j}(\phi^{j}(x) - (\phi^{*})^{j}(x))
      }{\tau}
    \right)
  \right]
  \\
  &=
  \sup_{\mu \ll \mu_{\vecpsi}}\left\{
    \mathbf{E}_{X \sim \mu}\left[
        \sum_{j=1}^{k}w_{j}(\phi^{j}(x) - (\phi^{*})^{j}(x))
    \right]
    - \tau \kl{\mu}{\mu_{\vecpsi}}
  \right\},
  \label{eq:kl-log-mgf-duality}
\end{align}
where in the final expression we have applied the
Donsker-Varadhan variational principle (i.e., convex-conjugate
duality between KL-divergence and cumulant generating functions); therein, the
supremum runs over probability measures $\mu$ absolutely continuous with
respect to $\mu_{\vecpsi}$, and it is attained by $\mu$ defined as
\begin{align}
  \mu(dx)
  &\propto
  \exp\left(
    \frac{1}{\tau}\sum_{j=1}^{k}w_{j}(\phi^{j}(x) - (\phi^{*})^{j}(x))
  \right)\mu_{\vecpsi}(dx)
  \\
  &\propto
  \exp\left(
    \frac{1}{\tau}\sum_{j=1}^{k}w_{j}(\phi^{j}(x) - (\phi^{*})^{j}(x))
  \right)
  \exp\left(
    -\frac{1}{\tau}\sum_{j=1}^{k}w_{j}\phi^{j}(x)
  \right)\piref(dx)
  \\
  &\propto
  \exp\left(
    -\frac{1}{\tau}\sum_{j=1}^{k}w_{j}(\phi^{*})^{j}(x)
  \right)
  \piref(dx)
  = \pi_{\vecpsi^{*}}(dx).
\end{align}
That is, the supremum in \eqref{eq:kl-log-mgf-duality} is attained by $\mu =
\mu_{\vecpsi^{*}}$. Hence, the identity
\eqref{eq:suboptimality-to-kl-proof-first-equation} becomes
\begin{align}
  &E^{\vecnu,w}_{\lambda, \tau}(\vecpsi^{*})
  - E^{\vecnu,w}_{\lambda,\tau}(\vecpsi)
  \\
  &=
  \sum_{j=1}^{k}
  w_{j}
  \mathbf{E}_{Y \sim \nu^{j}}\left[(\psi^{*})^{j}(Y) - \psi^{j}(Y)\right]
  -
  \mathbf{E}_{X \sim \mu_{\vecpsi^{*}}}\bigg[
      \sum_{j=1}^{k}w_{j}(\phi^{j}(X) - (\phi^{*})^{j}(X))
  \bigg]
  \\
  &\quad\quad
  + \tau \kl{\mu_{\vecpsi^{*}}}{\mu_{\vecpsi}}
  \\
  &=
  \sum_{j=1}^{k}
  w_{j}
  \left(
    \mathbf{E}_{Y \sim \nu^{j}}\left[(\psi^{*})^{j}(Y) -
    \psi^{j}(Y)\right]
    +
    \mathbf{E}_{X \sim \mu_{\vecpsi^{*}}}\left[
      (\phi^{*})^{j}(X))
      - \phi^{j}(X)
    \right]
  \right)
  + \tau \kl{\mu_{\vecpsi^{*}}}{\mu_{\vecpsi}}
  \\
  &\geq
  \tau \kl{\mu_{\vecpsi^{*}}}{\mu_{\vecpsi}},
\end{align}
where the final inequality follows by noting that for each $j$ the optimality
of the pair $((\phi^{*})^{j}, (\psi^{*})^{j})$ for the entropic optimal transport
dual objective $E^{\mu_{\vecpsi^{*}}, \nu^{j}}_{\lambda}$ implies
that
\begin{align}
    &\mathbf{E}_{Y \sim \nu^{j}}\left[(\psi^{*})^{j}(Y) -
    \psi^{j}(Y)\right]
    +
    \mathbf{E}_{X \sim \mu_{\vecpsi^{*}}}\left[
      (\phi^{*})^{j}(X)
      - \phi^{j}(X)
    \right]
    \\
    &=
    E^{\mu, \nu^{j}}_{\lambda}((\phi^{*})^{j}, (\psi^{*})^{j})
    - E^{\mu,\nu^{j}}_{\lambda}(\phi^{j}, \psi^{j})
    \geq 0.
\end{align}
The proof of Lemma~\ref{lemma:suboptimality-to-kl} is complete.\hfill\qed

\section{Proof of Proposition~\ref{prop:one-step-improvement}}
\label{sec:proof-of-one-step-improvement-proposition}
Recall that for any non-negative integer $t$ we have
\begin{equation}
  \mu_{t}(dx) = Z_{t}^{-1}
  \exp\left(-\frac{\sum_{j=1}^{k}
  w_{j}\phi^{j}_{t}(x)}{\tau}\right) \piref(dx).
\end{equation}
where $Z_{t}$ is the normalizing constant defined by
\begin{equation}
  Z_{t} = \int_{\mathcal{X}}
  \exp\left(-\frac{\sum_{j=1}^{k}
  w_{j}\phi^{j}_{t}(x)}{\tau}\right) \piref(dx).
\end{equation}
With the notation introduced above, we have
\begin{equation}
  E(\vecpsi_{t})
  = \sum_{j=1}^{k}w_{j}\mathbf{E}_{Y \sim \nu^{j}}\left[\psi^{j}_{t}(Y)\right]
  - \tau \log Z_{t}.
\end{equation}
Hence,
\begin{align}
  E(\vecpsi_{t+1}) - E(\vecpsi_{t})
  &=
  \sum_{j=1}^{k}
  w_{j}\mathbf{E}_{Y \sim \nu^{j}}\left[\psi^{j}_{t+1}(Y) - \psi^{j}_{t}(Y)\right]
  - \tau \log \frac{Z_{t+1}}{Z_{t}}.
  \\
  &=
  \eta\lambda\sum_{j=1}^{k}
  w_{j}\mathbf{E}_{Y \sim \nu^{j}}\left[\log\frac{d\nu^{j}}{d\nu^{j}_{t}}(Y)\right]
  - \tau \log \frac{Z_{t+1}}{Z_{t}}.
  \\
  &=
  \min\left(\lambda, \tau\right)
  \sum_{j=1}^{k}
  w_{j}
  \kl{\nu^{j}}{\nu^{j}_{t}}
  - \tau \log \frac{Z_{t+1}}{Z_{t}}.
\end{align}
Therefore, to prove Proposition~\ref{prop:one-step-improvement} it suffices to
show that the inequality
\begin{equation}
  \label{eq:prop-one-step-improvement-sufficient}
  \log \frac{Z_{t+1}}{Z_{t}} \leq 0
\end{equation}
holds for any $t \geq 0$.
We will complete the proof of Proposition~\ref{prop:one-step-improvement}
using the following lemma, the proof of which is
deferred to the end of this section.

\begin{lemma}
  \label{lemma:log-Z-ratio-bound}
  Let $(\vecpsi_{t})_{t \geq 0}$ be any sequence of the form
  $$
    \psi_{t+1}^{j} = \psi_{t}^{j} + \eta\lambda\log(\Delta^{j}_{t}),
  $$
  where for $j \in \{1,\dots,k\}$,  $(\Delta_{t}^{j})_{t \geq 0}$ is an
  arbitrary sequence of strictly positive functions
  and $\eta = \min(1, \tau/\lambda)$.
  Then, for any $t \geq 0$ it holds that
  \begin{equation}
    \tau\log\frac{Z_{\vecpsi_{t+1}}}{Z_{\vecpsi_{t}}}
    \leq
    \min(\lambda, \tau)
    \log
    \sum_{j=1}^{k}w_{j}
    \mathbf{E}_{Y \sim \nu^{j}_{\vecpsi_{t}}}\left[
      \Delta^{j}_{t}(Y)
    \right].
  \end{equation}
\end{lemma}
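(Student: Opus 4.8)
The plan is to track how the Schrödinger potentials $\phi_{\psi^{j}}$, and hence the normalizing constants $Z_{\vecpsi}$, transform under the update $\psi^{j}_{t+1} = \psi^{j}_{t} + \eta\lambda\log\Delta^{j}_{t}$, and then to reduce the claimed inequality to two elementary convexity facts — the weighted arithmetic–geometric mean inequality (legitimate because $\sum_{j}w_{j}=1$) and Jensen for power functions — treating the regimes $\tau\ge\lambda$ and $\tau<\lambda$ separately. Throughout, $\phi^{j}_{t}=\phi_{\psi^{j}_{t}}$ and, consistently with the proof of Proposition~\ref{prop:one-step-improvement}, $Z_{\vecpsi}=\int\exp\bigl(-\tfrac{1}{\tau}\sum_{j}w_{j}\phi_{\psi^{j}}(x)\bigr)\piref(dx)$.

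First I would record the transformation formula for the potentials. Using the defining expression for $\phi_{\psi}$ together with the pointwise normalization $\int\exp\bigl((\phi^{j}_{t}(x)+\psi^{j}_{t}(y)-c(x,y))/\lambda\bigr)\,\nu^{j}(dy)=1$ (valid for each fixed $x$), a short computation gives
\[
\phi^{j}_{t+1}(x) = \phi^{j}_{t}(x) - \lambda\log\mathbf{E}_{Y\sim p^{j}_{t,x}}\bigl[\Delta^{j}_{t}(Y)^{\eta}\bigr],
\qquad
p^{j}_{t,x}(dy) := \exp\!\bigl((\phi^{j}_{t}(x)+\psi^{j}_{t}(y)-c(x,y))/\lambda\bigr)\,\nu^{j}(dy),
\]
where $p^{j}_{t,x}$ is, by the same normalization, a probability measure on $\mathcal X$ for every $x$. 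Substituting into $Z_{\vecpsi_{t+1}}$ and recalling $\mu_{t}(dx)=Z_{\vecpsi_{t}}^{-1}\exp\bigl(-\tfrac{1}{\tau}\sum_{j}w_{j}\phi^{j}_{t}(x)\bigr)\piref(dx)$ yields the key identity
\[
\frac{Z_{\vecpsi_{t+1}}}{Z_{\vecpsi_{t}}}
= \mathbf{E}_{X\sim\mu_{t}}\Bigl[\;\prod_{j=1}^{k}\bigl(\mathbf{E}_{Y\sim p^{j}_{t,X}}[\Delta^{j}_{t}(Y)^{\eta}]\bigr)^{w_{j}\lambda/\tau}\Bigr].
\]
I would also isolate the Fubini identity $\mathbf{E}_{X\sim\mu_{t}}\mathbf{E}_{Y\sim p^{j}_{t,X}}[g(Y)] = \mathbf{E}_{Y\sim\nu^{j}_{\vecpsi_{t}}}[g(Y)]$, which is immediate from the definition \eqref{eq:psi-marginals} of $\nu^{j}_{\vecpsi_{t}}$ (and I would assume the relevant expectations are finite, the inequality being trivial otherwise).

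Next I would split into cases. If $\tau\ge\lambda$ then $\eta=1$, $\min(\lambda,\tau)=\lambda$, and $\lambda/\tau\le1$; inside the expectation over $X$ I would apply weighted AM–GM, $\prod_{j}c_{j}^{w_{j}}\le\sum_{j}w_{j}c_{j}$ with $c_{j}=\mathbf{E}_{Y\sim p^{j}_{t,X}}[\Delta^{j}_{t}(Y)]$, then pull the exponent $\lambda/\tau$ outside the expectation over $X$ by Jensen (concavity of $s\mapsto s^{\lambda/\tau}$ on $[0,\infty)$), and finally apply the Fubini identity, reaching $Z_{\vecpsi_{t+1}}/Z_{\vecpsi_{t}}\le\bigl(\sum_{j}w_{j}\mathbf{E}_{Y\sim\nu^{j}_{\vecpsi_{t}}}[\Delta^{j}_{t}]\bigr)^{\lambda/\tau}$, which is the claim after taking $\tau\log(\cdot)$. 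If $\tau<\lambda$ then $\eta=\tau/\lambda<1$, $\min(\lambda,\tau)=\tau$, and $\lambda/\tau=1/\eta$; here I would first use Jensen (concavity of $s\mapsto s^{\eta}$) to get $\mathbf{E}_{Y\sim p^{j}_{t,X}}[\Delta^{j}_{t}(Y)^{\eta}]\le\bigl(\mathbf{E}_{Y\sim p^{j}_{t,X}}[\Delta^{j}_{t}(Y)]\bigr)^{\eta}$, raise to the power $1/\eta$, then apply weighted AM–GM across $j$ and the Fubini identity to get $Z_{\vecpsi_{t+1}}/Z_{\vecpsi_{t}}\le\sum_{j}w_{j}\mathbf{E}_{Y\sim\nu^{j}_{\vecpsi_{t}}}[\Delta^{j}_{t}]$, which is again the claim.

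The routine part is the bookkeeping; the one place that needs care is deriving the transformation formula for $\phi^{j}_{t+1}$ and recognizing $p^{j}_{t,x}$ as a genuine probability measure, since this is exactly what legitimizes the AM–GM step (the exponents $w_{j}$ sum to one) and the Jensen steps (applied to honest expectations). The deliberate pairing of the damping $\eta=\min(1,\tau/\lambda)$ with the exponent $\lambda/\tau$ is what makes the two convexity inequalities line up in each regime: without damping (i.e.\ $\eta=1$ while $\tau<\lambda$) the exponent $\lambda/\tau>1$ makes $s\mapsto s^{\lambda/\tau}$ convex, the argument breaks, and indeed $\log(Z_{t+1}/Z_{t})\le0$ can fail — matching the empirically observed divergence of the undamped scheme.
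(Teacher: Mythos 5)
Your proof is correct and rests on the same ingredients as the paper's: the transformation formula for $\phi^{j}_{t+1}-\phi^{j}_{t}$ which recognizes $p^{j}_{t,x}$ as a probability measure, the resulting product identity for $Z_{\vecpsi_{t+1}}/Z_{\vecpsi_{t}}$, weighted AM--GM, power-function Jensen, and Fubini to pass to $\nu^{j}_{\vecpsi_{t}}$. The paper packages the argument slightly differently --- it isolates Lemma~\ref{lemma:zt-ratio} (the AM--GM/Jensen bound on $\log(Z_{\vecpsi_{t+1}}/Z_{\vecpsi_{t}})$ in terms of the raw increments $\phi^{j}_{t+1}-\phi^{j}_{t}$) and applies the remaining Jensen/Fubini steps afterwards, so the order of AM--GM and Jensen is swapped relative to yours in each regime --- but this is a cosmetic reorganization of the same calculation, not a different route.
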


To complete the proof of Proposition~\ref{prop:one-step-improvement},
we will apply the above lemma with $\Delta_{t}^{j} = \log
\frac{d\nu^{j}}{d\nu_{t}^{j}}$. Indeed, we have
\begin{align}
  \tau \log \frac{Z_{t+1}}{Z_{t}}
  &\leq
  \min(\lambda,\tau)
  \log
  \sum_{j=1}^{k}w_{j}
  \mathbf{E}_{Y \sim \nu^{j}_{t}}\left[
    \frac{d\nu^{j}}{d\nu_{t}^{j}}(Y)
  \right]
  \\
  &=
  \min(\lambda,\tau)
  \log
  \sum_{j=1}^{k}w_{j}
  \mathbf{E}_{Y \sim \nu^{j}}\left[
    1
  \right]
  \\
  &=
  0.
\end{align}
By \eqref{eq:prop-one-step-improvement-sufficient}, the proof of
Proposition~\ref{prop:one-step-improvement} is complete.
\hfill\qed

\subsection{Proof of Lemma~\ref{lemma:log-Z-ratio-bound}}

We will break down the proof
with the help of the following lemma, the proof of which can be found in
Section~\ref{sec:proof-of-Zt-ratio-lemma}.

\begin{lemma}
  \label{lemma:zt-ratio}
  For any sequence $(\vecpsi_{t})_{t \geq 0}$ and any $t \geq 0$
  it holds that
  \begin{equation}
    \log
    \frac{Z_{\vecpsi_{t+1}}}{Z_{\vecpsi{t}}}
    \leq
    \begin{cases}
      \frac{\lambda}{\tau}
      \log
      \sum_{j=1}^{k}
      w_{j}
      \mathbf{E}_{X \sim \mu_{t}}\left[
          \exp\left(\frac{-\phi^{j}_{t+1}(X)+\phi^{j}_{t}(X)}{\tau}
          \right)^{\tau/\lambda}
        \right]
        & \text{ if }\tau \geq \lambda,\\
      \log
      \sum_{j=1}^{k}
      w_{j}
      \mathbf{E}_{X \sim \mu_{t}}\left[
          \exp\left(\frac{-\phi^{j}_{t+1}(X)+\phi^{j}_{t}(X)}{\tau}
          \right)
        \right]
        & \text{ if }\tau < \lambda,
    \end{cases}
  \end{equation}
  where $\vecphi_{t} = \vecphi_{\vecpsi_{t}}$
  and $\mu_{t}(dx) =
  Z_{\vecpsi_{t}}^{-1}
  \exp(-\sum_{j=1}^{k}w_{j}\phi_{t}^{j}(x)/\tau)\piref(dx)$.
\end{lemma}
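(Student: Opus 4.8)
The plan is to rewrite the ratio $Z_{\vecpsi_{t+1}}/Z_{\vecpsi_t}$ as a single expectation against $\mu_t$ and then apply Jensen's inequality in a form dictated by the sign of $\tau-\lambda$. For the change of measure, observe that the formula for $\mu_t$ gives $\piref(dx) = Z_{\vecpsi_t}\exp\!\big(\sum_{j}w_j\phi_t^j(x)/\tau\big)\,\mu_t(dx)$, so substituting this into $Z_{\vecpsi_{t+1}} = \int\exp\!\big(-\sum_j w_j\phi_{t+1}^j(x)/\tau\big)\piref(dx)$ and dividing by $Z_{\vecpsi_t}$ yields
\begin{align*}
  \frac{Z_{\vecpsi_{t+1}}}{Z_{\vecpsi_t}}
  &= \mathbf{E}_{X\sim\mu_t}\!\left[\exp\!\left(\sum_{j=1}^{k} w_j\,\frac{\phi_t^j(X)-\phi_{t+1}^j(X)}{\tau}\right)\right]
  \\
  &= \mathbf{E}_{X\sim\mu_t}\!\left[\prod_{j=1}^{k} \Lambda_j(X)^{w_j}\right],
\end{align*}
where $\Lambda_j(x) := \exp\!\big((\phi_t^j(x)-\phi_{t+1}^j(x))/\tau\big)$ and all integrals below are finite (each $\phi_t^j$ is bounded, being of the form $\phi_{\psi}$).

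Next I would split into the two cases. If $\tau<\lambda$, weighted AM--GM (equivalently, Jensen for the convex function $\exp$, using $\sum_j w_j=1$) gives $\prod_j\Lambda_j^{w_j}\le\sum_j w_j\Lambda_j$ pointwise; taking $\mathbf{E}_{\mu_t}$ and then $\log$ yields the second branch. If $\tau\ge\lambda$, set $p:=\tau/\lambda\ge1$ and write $\prod_j\Lambda_j^{w_j}=\big(\prod_j(\Lambda_j^{p})^{w_j}\big)^{1/p}$. Since $u\mapsto u^{1/p}$ is concave, Jensen pulls the expectation inside, $\mathbf{E}_{\mu_t}\!\big[\big(\prod_j(\Lambda_j^{p})^{w_j}\big)^{1/p}\big]\le\big(\mathbf{E}_{\mu_t}\!\big[\prod_j(\Lambda_j^{p})^{w_j}\big]\big)^{1/p}$, and weighted AM--GM applied pointwise gives $\prod_j(\Lambda_j^{p})^{w_j}\le\sum_j w_j\Lambda_j^{p}$, so the inner expectation is at most $\sum_j w_j\mathbf{E}_{\mu_t}[\Lambda_j^{p}]$. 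Since $\Lambda_j^{p}=\exp\!\big((\phi_t^j-\phi_{t+1}^j)/\tau\big)^{\tau/\lambda}$ and $\log(y^{1/p})=(\lambda/\tau)\log y$, taking logarithms gives the first branch.

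I do not expect a genuine obstacle: the statement is essentially two applications of Jensen's inequality. The only points requiring care are getting the directions right --- $\exp$ is convex, whereas $u\mapsto u^{1/p}$ is concave precisely when $p\ge1$, which is exactly what forces the case split at $\tau=\lambda$ --- and the bookkeeping of exponents so that $\Lambda_j^{p}$ comes out as $\exp((\phi_t^j-\phi_{t+1}^j)/\tau)^{\tau/\lambda}$. It is also worth noting that this lemma concerns only the ratio $Z_{\vecpsi_{t+1}}/Z_{\vecpsi_t}$ and holds for an arbitrary sequence $(\vecpsi_t)_{t\ge0}$, so neither the Sinkhorn structure nor the damping factor $\eta$ enters here; those play a role only later, when this bound is combined with the update rule in the proof of Lemma~\ref{lemma:log-Z-ratio-bound}.
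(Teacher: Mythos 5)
Your proof is correct and follows essentially the same route as the paper: both derive $Z_{\vecpsi_{t+1}}/Z_{\vecpsi_t}=\mathbf{E}_{\mu_t}\bigl[\prod_j \exp((\phi^j_t-\phi^j_{t+1})/\tau)^{w_j}\bigr]$ by a change of measure, then apply weighted AM--GM directly when $\tau<\lambda$ and, when $\tau\ge\lambda$, first Jensen for the concave power $u\mapsto u^{\lambda/\tau}$ and then AM--GM. The only difference is cosmetic (you substitute for $\piref$ whereas the paper manipulates $d\mu_{t+1}/d\mu_t$), and your remark that the lemma is agnostic to the Sinkhorn/damping structure is a fair reading of its hypotheses.
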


Observe that the sequence $(\vecpsi_{t})_{t\geq0}$ of the form stated in
Lemma~\ref{lemma:log-Z-ratio-bound} satisfies, for any
for any $j \in \{1,\dots,k\}$ and any $t \geq 0$,
\begin{equation}
    \exp\left(
      \frac{-\phi^{j}_{t+1}+\phi^{j}_{t}}{\tau}
    \right)
    =
    \exp\left(
      -\frac{\lambda}{\tau}
      \log\frac{d\mu_{t}}{d\tilde{\mu}^{j}_{t}}
    \right)
    =
    \left(\frac{d\tilde{\mu}^{j}_{t}}{d\mu_{t}}\right)^{\lambda/\tau},
\end{equation}
where
\begin{align}
  \frac{d\tilde{\mu}^{j}_{t}}{d\mu_{t}}(x)
  &=
  \int_{\mathcal{X}}
  \nu(dy)\exp\left(\frac{\psi^{j}_{t+1}(y) + \phi^{j}_{t}(x) - c(x,y)}{\lambda}\right)
  \\
  &=
  \int_{\mathcal{X}}
  \nu^{j}(dy) \Delta_{t}^{j}(y)^{\eta}\exp\left(
    \frac{\psi^{j}_{t}(y) + \phi^{j}_{t}(x) - c(x,y)}{\lambda}\right).
\end{align}
Hence, by Lemma~\ref{lemma:zt-ratio}
we have
\begin{align}
  &\log\frac{Z_{\vecpsi_{t+1}}}{Z_{\vecpsi_{t}}}
  \\
  &\leq
  \frac{1}{\tau}\min(\lambda, \tau)
  \sum_{j=1}^{k}
  w_{j}
  \mathbf{E}_{X \sim \mu_{t}}\Bigg[
  \\&\quad\quad
    \left(
    \int_{\mathcal{X}}
    \nu^{j}(dy)\Delta_{t}^{j}(y)^{\eta}\exp\left(
      \frac{\psi^{j}_{t}(y) + \phi^{j}_{t}(X) - c(X,y)}{\lambda}\right)
    \right)^{\max(1, \lambda/\tau)}
  \Bigg].
  \label{eq:Zt-ratio-bound-before-case-splitting}
\end{align}
We split the remaining proof into two cases:
$\tau \geq \lambda$ and $\tau < \lambda$.

\paragraph{The case \texorpdfstring{$\tau \geq \lambda$}{tau >= lambda}.}
When $\tau \geq \lambda$, we have $\max(1,\lambda/\tau) = 1$ and $\eta =
\min(1,\tau/\lambda) = 1$. Thus,  \eqref{eq:Zt-ratio-bound-before-case-splitting}
yields
\begin{align}
  &\log\frac{Z_{\vecpsi_{t+1}}}{Z_{\vecpsi_{t}}}
  \\
  &\leq
  \frac{1}{\tau}\min(\lambda, \tau)
  \log
  \sum_{j=1}^{k}
  w_{j}
  \mathbf{E}_{X \sim \mu_{t}}\left[
    \int_{\mathcal{X}}
    \nu^{j}(dy)\Delta_{t}^{j}(y)\exp\left(
      \frac{\psi^{j}_{t}(y) + \phi^{j}_{t}(X) - c(X,y)}{\lambda}\right)
  \right]
  \\
  &=
  \frac{1}{\tau}\min(\lambda, \tau)
  \log
  \sum_{j=1}^{k}
  w_{j}
  \left[
    \int_{\mathcal{X}}
    \mu_{t}(dx)
    \int_{\mathcal{X}}
    \nu^{j}(dy)\Delta_{t}^{j}(y)\exp\left(
      \frac{\psi^{j}_{t}(y) + \phi^{j}_{t}(X) - c(X,y)}{\lambda}\right)
  \right]
  \\
  &=
  \frac{1}{\tau}\min(\lambda, \tau)
  \log
  \sum_{j=1}^{k}
  w_{j}
  \left[
    \int_{\mathcal{X}}
    \Delta_{t}^{j}(y)\nu^{j}(dy)
    \int_{\mathcal{X}}
    \exp\left(
      \frac{\psi^{j}_{t}(y) + \phi^{j}_{t}(X) - c(X,y)}{\lambda}\right)
    \mu_{t}(dx)
  \right]
  \\
  &=
  \frac{1}{\tau}\min(\lambda, \tau)
  \log
  \sum_{j=1}^{k}
  w_{j}
  \left[
    \int_{\mathcal{X}}
    \Delta_{t}^{j}(y)\nu^{j}(dy)
    \frac{d\nu^{j}_{\vecpsi_{t}}}{d\nu^{j}}(y)
  \right]
  \\
  &=
  \frac{1}{\tau}\min(\lambda, \tau)
  \log
  \sum_{j=1}^{k}
  w_{j}
  \mathbf{E}_{Y \sim \nu^{j}_{\vecpsi_{t}}}
  \left[
    \Delta^{j}_{t}(y)
  \right].
\end{align}
This completes the proof of Lemma~\ref{lemma:log-Z-ratio-bound}
when $\tau \geq \lambda$.

\paragraph{The case \texorpdfstring{$\tau < \lambda$}{tau < lambda}.}

For $j \in \{1, \dots, k\}$ and any $x \in \mathcal{X}$ define the measure
$\rho_{x}$ by
\begin{equation}
  \rho^{j}_{x}(dy) =
  \nu^{j}(dy)
  \exp\left(\frac{\psi^{j}_{t}(y) + \phi^{j}_{t}(x) -
  c(x,y)}{\lambda}\right).
\end{equation}
By the definition of $\phi^{j}_{t}$, we have
\begin{align}
  &\int_{\mathcal{X}} \rho^{j}_{x}(dy)
  \\
  &=
  \int_{\mathcal{X}}
    \nu(dy)
    \exp\left(\frac{\psi^{j}_{t}(y) - c(x,y)}{\lambda}\right)
    \exp\left(\frac{\phi^{j}_{t}(x)}{\lambda}\right)
  \\
  &=
  \int_{\mathcal{X}}
    \nu(dy)
    \exp\left(\frac{\psi^{j}_{t}(y) - c(x,y)}{\lambda}\right)
    \exp\left(-\log\int_{\mathcal{X}}
      \nu^{j}(dy')
      \exp\left(\frac{\psi^{j}_{t}(y') - c(x,y')}{\lambda}\right)
    \right)
  \\
  &=
  1
\end{align}
In particular, $\rho_{x}$ is a probability measure.
Hence,
\eqref{eq:Zt-ratio-bound-before-case-splitting} can be rewritten as
\begin{align}
  \log \frac{Z_{\vecpsi_{t+1}}}{Z_{\vecpsi_{t}}}
  &\leq
  \log
  \sum_{j=1}^{k}
  w_{j}
  \mathbf{E}_{X \sim \mu_{t}}\left[
    \mathbf{E}_{Y \sim \rho^{j}_{X}}\left[
    \Delta^{j}_{t}(Y)^{\eta}
    \bigg\vert X
  \right]^{\lambda/\tau}
\right]
\end{align}
Because $\lambda/\tau > 1$, the function $x \mapsto x^{\lambda/\tau}$ is
convex. Applying Jensen's inequality to the conditional expectation and using
the fact that $\eta\lambda/\tau = 1$, it follows that
\begin{align}
  \log \frac{Z_{\vecpsi_{t+1}}}{Z_{\vecpsi_{t}}}
  &\leq
  \log
  \sum_{j=1}^{k}
  w_{j}
  \mathbf{E}_{X \sim \mu_{t}}\left[
    \mathbf{E}_{Y \sim \rho^{j}_{X}}\left[
    \Delta^{j}_{t}(Y)
    \bigg\vert X
  \right]
  \right]
  \\
  &=
  \log
  \sum_{j=1}^{k}
  w_{j}
  \int_{\mathcal{X}}
    \mu_{t}(dx)
  \int_{\mathcal{X}}
  \Delta_{t}^{j}(y)
  \exp\left(\frac{\psi^{j}_{t}(y) + \phi^{j}_{t}(x) -
    c(x,y)}{\lambda}\right)
    \nu(dy).
    \label{eq:proof-of-one-step-improvement-proposition-pre-Fubini-step}
\end{align}
By the definition of $\nu^{j}_{\vecpsi_{t}}$ we have
\begin{equation}
  \frac{d\nu^{j}_{\vecpsi_{t}}}{d\nu^{j}}(y)
  =
  \int_{\mathcal{X}}
  \exp\left(\frac{\psi^{j}_{t}(y) + \phi^{j}_{t}(x) -
  c(x,y)}{\lambda}\right)\mu_{t}(dx).
\end{equation}
Interchanging the order of integration in
\eqref{eq:proof-of-one-step-improvement-proposition-pre-Fubini-step}
and plugging in the above equation yields
\begin{align}
  \log \frac{Z_{\vecpsi_{t+1}}}{Z_{\vecpsi_{t}}}
  &\leq
  \log
  \sum_{j=1}^{k}
  w_{j}
  \int_{\mathcal{X}}
  \left[
    \int_{\mathcal{X}}
    \exp\left(\frac{\psi^{j}_{t}(y) + \phi^{j}_{t}(x) -
    c(x,y)}{\lambda}\right)
    \mu_{t}(dx)
  \right]
  \Delta^{j}_{t}(y)
  \nu^{j}(dy)
  \\
  &=
  \log
  \sum_{j=1}^{k}
  w_{j}
  \int_{\mathcal{X}}
  \left[
    \frac{d\nu^{j}_{\vecpsi_{t}}}{d\nu^{j}}(y)
  \right]
  \Delta_{t}^{j}(y)
  \nu^{j}(dy)
  \\
  &=
  \log
  \sum_{j=1}^{k}
  w_{j}
  \mathbf{E}_{Y \sim \nu^{j}_{\vecpsi_{t}}}\left[
    \Delta^{j}_{t}(Y)
  \right].
\end{align}
This completes the proof of Lemma~\ref{lemma:log-Z-ratio-bound}.
\hfill\qed

\subsection{Proof of Lemma~\ref{lemma:zt-ratio}}
\label{sec:proof-of-Zt-ratio-lemma}
To simplify the notation, denote $Z_{t} = Z_{\vecpsi_{t}}$.
Let $x \in \mathcal{X}$ and $t \geq 0$. We have $\mu_{t} \ll \mu_{t+1}$ with
the Radon-Nikodym derivative $d\mu_{t+1}/d\mu_{t}$ given by
\begin{align}
  \frac{d\mu_{t+1}}{d\mu_{t}}(x)
  &= \frac{Z_{t}}{Z_{t+1}} \exp\left(
    \frac{-\sum_{j=1}^{k}w_{k}(\phi^{j}_{t+1}(x) - \phi^{j}_{t}(x))}{\tau}
  \right)
  \\
  &=
  \frac{Z_{t}}{Z_{t+1}} \prod_{j=1}^{k}
    \exp\left(
      \frac{-\phi^{j}_{t+1}(x) + \phi^{j}_{t}(x)}{\tau}
    \right)^{w_{j}}.
\end{align}
Multiplying both sides by $Z_{t+1}/Z_{t}$ and taking expectations with respect
to $\mu_{t}$ yields
\begin{align}
  \frac{Z_{t+1}}{Z_{t}}
  &=
  \mathbf{E}_{X \sim \mu_{t+1}}\left[
    \frac{Z_{t+1}}{Z_{t}}
  \right]
  \\
  &=
  \mathbf{E}_{X \sim \mu_{t}}\left[
    \frac{Z_{t+1}}{Z_{t}}
    \frac{d\mu_{t+1}}{d\mu_{t}}(X)
  \right]
  \\
  &=
  \mathbf{E}_{X \sim \mu_{t}}\left[
    \prod_{j=1}^{k}
    \exp\left(
      \frac{-\phi^{j}_{t+1}(X) + \phi^{j}_{t}(X)}{\tau}
    \right)^{w_{j}}
  \right].
\end{align}
In the case $\tau < \lambda$, the proof is complete by the Artihmetic-Geometric
mean inequality (recall that $w_{j} > 0$ for $j=1,\dots,k$ and
$\sum_{j=1}^{k}w_{j} = 1$).
On the other hand, if $\tau \geq \lambda$ then $x \mapsto x^{\lambda/\tau}$ is
concave. Hence, it follows that
\begin{align}
  \log
  \frac{Z_{t+1}}{Z_{t}}
  &=
  \log
  \mathbf{E}_{X \sim \mu_{t}}\left[
    \left(
    \prod_{j=1}^{k}
    \exp\left(
      \frac{-\phi^{j}_{t+1}(X) + \phi^{j}_{t}(X)}{\tau}
    \right)^{w_{j}\tau/\lambda}
    \right)^{\lambda/\tau}
  \right]
  \\
  &\leq
  \log
  \mathbf{E}_{X \sim \mu_{t}}\left[
    \left(
    \prod_{j=1}^{k}
    \exp\left(
      \frac{-\phi^{j}_{t+1}(X) + \phi^{j}_{t}(X)}{\tau}
    \right)^{w_{j}\tau/\lambda}
    \right)
  \right]^{\lambda/\tau}
  \\
  &=
  \frac{\lambda}{\tau}
  \log
  \mathbf{E}_{X \sim \mu_{t}}\left[
    \left(
    \prod_{j=1}^{k}
    \exp\left(
      \frac{-\phi^{j}_{t+1}(X) + \phi^{j}_{t}(X)}{\tau}
    \right)^{w_{j}\tau/\lambda}
    \right)
  \right]
  \\
  &\leq
  \sum_{j=1}^{k}
  w_{j}
  \mathbf{E}_{X \sim \mu_{t}}\left[
    \exp\left(
      \frac{-\phi^{j}_{t+1}(X) + \phi^{j}_{t}(X)}{\tau}
    \right)^{\tau/\lambda}
  \right],
\end{align}
where the final step follows via the Arithmetic-Geometric mean
inequality. This completes the proof of Lemma~\ref{lemma:zt-ratio}.
\hfill\qed

\section{Proof of Theorem~\ref{thm:inexact-scheme-convergence}}
\label{sec:inexact-algorithm-convergence-proof}
For every $t \geq 0$ and $j \in \{1,\dots,k\}$, let
$\widetilde{\nu}^{j}_{t}$
be the distribution returned by the approximate Sinkhorn oracle that satisfies
the properties listed in Definition~\ref{dfn:approximate-sinkhorn-oracle}.
We follow along the lines of proof of
Theorem~\ref{thm:exact-scheme-convergence}.

First, we will establish an upper
bound on the oscillation norm of the iterates $\widetilde{\vecpsi}_{t}$.
Indeed, by the property four in
Definition~\ref{dfn:approximate-sinkhorn-oracle} we have
\begin{equation}
  \|\widetilde{\psi}^{j}_{t+1}\|_{\mathrm{osc}}
  \leq
  (1-\eta)\|\widetilde{\psi}^{j}_{t}\|_{\mathrm{osc}} + \eta
  c_{\infty}(\mathcal{X}).
\end{equation}
Since $\widetilde{\psi}^{j}_{0} = 0$, for any $t \geq 0$ we have
$\|\widetilde{\psi}^{j}_{t}\|_{\mathrm{osc}} \leq c_{\infty}(\mathcal{X})$.

Let $\tilde{\delta}_{t} = E_{\lambda,\tau}^{\vecnu, w}(\psi^{*}) -
E_{\lambda,\tau}^{\vecnu, w}(\widetilde{\vecpsi}_{t})$ be the suboptimality gap
at time $t$. Using the concavity upper bound
\eqref{eq:concave-suboptimality-gap} and the property two in
Definition~\ref{dfn:approximate-sinkhorn-oracle} we have
\begin{align}
  \tilde{\delta}_{t}
  &\leq
  2c_{\infty}(\mathcal{X})\sum_{j=1}^{k}w_{j}\|\nu^{j} -
  \nu_{t}^{j}\|_{\mathrm{TV}}
  \\
  &\leq
  \varepsilon +
  2c_{\infty}(\mathcal{X})\sum_{j=1}^{k}w_{j}\|\nu^{j} -
  \widetilde{\nu}_{t}^{j}\|_{\mathrm{TV}}
  \\
  &\leq
  \varepsilon +
  \sqrt{2}c_{\infty}(\mathcal{X})\sum_{j=1}^{k}w_{j}
  \sqrt{\kl{\nu^{j}}{\widetilde{\nu}_{t}^{j}}}
  \\
  &\leq
  \varepsilon +
  \sqrt{2}c_{\infty}(\mathcal{X})
  \sqrt{\sum_{j=1}^{k}w_{j}\kl{\nu^{j}}{\widetilde{\nu}_{t}^{j}}}.
\end{align}
Combining the property three
stated in the
Definition~\ref{dfn:approximate-sinkhorn-oracle} with
Lemma~\ref{lemma:log-Z-ratio-bound} we obtain
\begin{align}
  \tilde{\delta}_{t} - \tilde{\delta}_{t+1}
  &\geq \min(\lambda,\tau)
  \sum_{j=1}^{k}w_{j}\kl{v^{j}}{\widetilde{v}_{t}^{j}}
  -\min(\lambda,\tau)
  \log\left(
    \sum_{j=1}^{k}
    w_{j}
    \int_{\mathcal{X}}
    \frac{d\nu_{t}}{d\widetilde{\nu}_{t}}(y)
    \nu^{j}(dy)
  \right)
  \\
  &\geq
  \sum_{j=1}^{k}w_{j}\kl{v^{j}}{\widetilde{v}_{t}^{j}}
  -\min(\lambda,\tau)
  \log\left(
    1 + \varepsilon^{2}/(2c_{\infty}(\mathcal{X})^{2})
  \right)
  \\
  &\geq
  \min(\lambda,\tau)
  \sum_{j=1}^{k}w_{j}\kl{v^{j}}{\widetilde{v}_{t}^{j}}
  -\frac{\min(\lambda,\tau)}{2c_{\infty}(\mathcal{X})^{2}}
  \varepsilon^{2}
  \\
  &\geq
  \frac{\min(\lambda,\tau)}{2c_{\infty}(\mathcal{X})^{2}}
  \max\left\{0, \tilde{\delta}_{t} - \varepsilon \right\}^{2}
  -\frac{\min(\lambda,\tau)}{2c_{\infty}(\mathcal{X})^{2}}
  \varepsilon^{2}.
\end{align}
Provided that $\widetilde{\delta}_{t} \geq 2\varepsilon$ it holds that
\begin{equation}
  (\tilde{\delta}_{t} - 2\varepsilon) - (\tilde{\delta}_{t+1} - 2\varepsilon)
  \geq
  \frac{\min(\lambda,\tau)}{2c_{\infty}(\mathcal{X})}
  (\tilde{\delta}_{t} - 2\varepsilon)^{2}.
\end{equation}
Let $T$ be the first index such that $\widetilde{\delta}_{T+1} < 2\varepsilon$
and set $T = \infty$ if no such index exists.
Then, the above equation is valid for any $t \leq T$. In particular, repeating
the proof of Theorem~\ref{thm:exact-scheme-convergence}, for any
$t \leq T$ we have
\begin{equation}
  \widetilde{\delta}_{t} - 2\varepsilon \leq
  \frac{2c_{\infty}(\mathcal{X})^{2}}{\min(\lambda,\tau)}\frac{1}{t},
\end{equation}
which completes the proof of this theorem. \hfill\qed

\section{Proof of Lemma~\ref{lemma:approximate-oracle-implementation}}
\label{sec:proof-of-approximate-oracle-implementation}

  The first property -- the positivity of the probability mass function of
  $\widetilde{\nu}^{j}$ -- is immediate from its definition.

  To simplify the notation, denote in what follows
  \begin{equation}
    K^{j}(x,y) = \exp\left(
    \frac{\phi_{\psi^{j}}(x) + \psi^{j}(y) - c(x,y)}{\lambda}\right).
  \end{equation}
  With this notation, recall that
  $$
    \widehat{\nu}^{j}_{\vecpsi}(y^{j}_{l})
    = \frac{1}{n}\sum_{i=1}^{n}\nu^{j}(y^{j}_{l})K(X_{i},y^{j}_{l}).
  $$
  The above is a sum of $n$ non-negative random variables bounded by one
  with expectation
  $$
    (\nu' )^{j}(y^{j}_{l})
    = \mathbf{E}_{X \sim \mu_{\vecpsi}'}\left[
      \nu^{j}(y^{j}_{l})
    \right]
  $$
  It follows by Hoeffding's inequality
  and the union bound that with probability
  at least $1-\delta$ the following holds for any
  $j \in \{1,\dots,k\}$ and any $l \in \{1, \dots, m_{j}\}$:
  \begin{equation}
    \label{eq:hoeffding}
    \left|
      \widehat{\nu}_{\vecpsi}(y^{j}_{l})
      - (\nu' )^{j}(y^{j}_{l})
    \right|
    \leq \sqrt{\frac{2\log\left(\frac{2m}{\delta}\right)}{n}}.
  \end{equation}
  In particular, the above implies that
  \begin{align}
    \|\widetilde{\nu}^{j}_{\vecpsi} - \nu^{j}_{\vecpsi}\|_{\mathrm{TV}}
    &\leq 2\zeta +
    (1-\zeta)
    \|\widetilde{\nu}^{j}_{\vecpsi} - \nu^{j}_{\vecpsi}\|_{\mathrm{TV}}
    \\
    &\leq 2\zeta +
    (1-\zeta)
    \|\widetilde{\nu}^{j}_{\vecpsi} - (\nu')^{j}\|_{\mathrm{TV}}
    + (1-\zeta)\|(\nu')^{j} - \nu^{j}_{\vecpsi}\|_{\mathrm{TV}}
    \\
    &\leq 2\zeta +
    \|\widetilde{\nu}^{j}_{\vecpsi} - (\nu')^{j}\|_{\mathrm{TV}}
    + \|(\nu')^{j} - \nu^{j}_{\vecpsi}\|_{\mathrm{TV}}
    \\
    &\leq 2\zeta + m_{j}\varepsilon_{\mu}
    + m_{j}
    \sqrt{\frac{2\log\left(\frac{2m}{\delta}\right)}{n}}.
  \end{align}
  Notice that the above bound can be made arbitrarily close to
  $m_{j}\varepsilon_{\mu}$ by taking a large enough $n$ and a small enough
  $\zeta$.
  This proves the second property of
  Definition~\ref{dfn:approximate-sinkhorn-oracle}.

  To prove the third property,
  observe that
  \begin{align}
    \mathbf{E}_{Y \sim \nu^{j}}\left[
      \frac{\nu^{j}_{\vecpsi}(Y)}{\widetilde{\nu}^{j}_{\vecpsi}(Y)}
    \right]
    &=
    \mathbf{E}_{Y \sim \nu^{j}}\left[
      \frac{\widehat{\nu}^{j}_{\vecpsi}(Y)}
      {\widetilde{\nu}^{j}_{\vecpsi}(Y)}
      + \frac{\nu^{j}_{\vecpsi}(Y) - \widehat{\nu}^{j}_{\vecpsi}(Y)
      }{\widetilde{\nu}^{j}_{\vecpsi}(Y)}
    \right]
    \\
    &\leq
    \mathbf{E}_{Y \sim \nu^{j}}\left[
      \frac{1}{1-\zeta} + \frac{\nu^{j}_{\vecpsi}(Y) - \widehat{\nu}^{j}_{\vecpsi}(Y)
      }{\widetilde{\nu}^{j}_{\vecpsi}(Y)}
    \right]
    \\
    &\leq
    \mathbf{E}_{Y \sim \nu^{j}}\left[
      1
      + \frac{\zeta}{1-\zeta}
      + \frac{\left|\nu^{j}_{\vecpsi}(Y) -
        \widehat{\nu}^{j}_{\vecpsi}(Y)\right|
      }{\widetilde{\nu}^{j}_{\vecpsi}(Y)}
    \right]
    \\
    &\leq
    1 +
    \frac{\zeta}{1-\zeta}
    + \frac{1}{\zeta}
    \|\nu^{j}_{\vecpsi}(Y) -
    \widehat{\nu}^{j}_{\vecpsi}(Y)\|_{\mathrm{TV}}
    \\
    &\leq
    1
    +
    2\zeta
    + \frac{1}{\zeta}
    \left(m_{j}\varepsilon_{\mu} + m_{j}
      \sqrt{\frac{2\log\left(\frac{2m}{\delta}\right)}{n}}
    \right).
  \end{align}
  This concludes the proof of the third property.

  It remains to prove the fourth property of
  Definition~\ref{dfn:approximate-sinkhorn-oracle}.
  Observe that for any $y,y'$ we have
  \begin{align}
    &\left(
      \psi^{j}(y) - \eta\lambda\log\frac{\widetilde{\nu}^{j}(y)}{\nu^{j}(y)}
    \right)
    -
    \left(
      \psi^{j}(y') - \eta\lambda\log\frac{\widetilde{\nu}^{j}(y')}{\nu^{j}(y')}
    \right)
    \\
    &=
    \left(\psi^{j}(y) - \psi^{j}(y')\right)
    + \eta\lambda\log
    \left(
    \frac
    {
      \zeta + (1-\zeta)\frac{1}{n}\sum_{i=1}^{n}K^{j}(X_{i}, y')
    }
    {
      \zeta + (1-\zeta)\frac{1}{n}\sum_{i=1}^{n}K^{j}(X_{i}, y)
    }
    \right)
    \\
    &=
    \left(\psi^{j}(y) - \psi^{j}(y')\right)
    + \eta\lambda\log
    \left(
    \frac
    {
      \frac{\zeta}{1-\zeta} + \frac{1}{n}\sum_{i=1}^{n}K^{j}(X_{i}, y')
    }
    {
      \frac{\zeta}{1-\zeta} + \frac{1}{n}\sum_{i=1}^{n}K^{j}(X_{i}, y)
    }
    \right)
    \\
    &\leq
    \left(\psi^{j}(y) - \psi^{j}(y')\right)
    + \eta\lambda\log
    \left(
    \frac
    {
      \frac{\zeta}{1-\zeta} + \exp\left(\frac{c_{\infty}(\mathcal{X}) +
        \psi^{j}(y') - \psi^{j}(y)}{\lambda}
      \right)\frac{1}{n}\sum_{i=1}^{n}K^{j}(X_{i}, y)
    }
    {
      \frac{\zeta}{1-\zeta} + \frac{1}{n}\sum_{i=1}^{n}K^{j}(X_{i}, y)
    }
    \right).
    \label{eq:psi-oscilation-zeta-bound-intermediate}
  \end{align}
  Now observe that for any $a,b > 0$ the function $g:[0,\infty) \to (0,\infty)$
  defined by $g(x) = (x+a)/(x+b)$ is increasing if $a < b$ and decreasing if $a
  \geq b$. Thus, $g$ is maximized either at zero or at infinity.
  It thus follows that
  \begin{align}
    &\eta\lambda\log\left(
      \frac
    {
      \frac{\zeta}{1-\zeta}
      +
      \exp\left(\frac{c_{\infty}(\mathcal{X})}{\lambda}
      \right)
      \frac{1}{n}\sum_{i=1}^{n}K^{j}(X_{i}, y)
    }
    {
      \frac{\zeta}{1-\zeta} + \frac{1}{n}\sum_{i=1}^{n}K^{j}(X_{i}, y)
    }
    \right)
    \\
    &\leq
    \begin{cases}
      \eta c_{\infty}(\mathcal{X})
      -\eta(\psi^{j}(y) - \psi^{j}(y'))
      &\text{if }
      \exp\left(\frac{c_{\infty}(\mathcal{X})}{\lambda}
      \right) \geq 1
      \\
      0&\text{otherwise.}
    \end{cases}
  \end{align}
  This proves the claim and completes the proof of this lemma.
  \hfill\qed

\section{Proof of Theorem~\ref{thm:inexact-algorithm-implementation}}
\label{sec:langevin-sampling-guarantees}
The purpose of this section is to show how sampling via Langevin Monte Carlo
algorithm yields the first provable convergence guarantees for computing
barycenters in the free-support setup (cf.\ the discussion at the end of
Section~\ref{sec:doubly-entropic-barycenters}). In particular, we provide
computational guarantees for implementing Algorithm~\ref{alg:inexact}.

A measure $\mu$ is said to satisfy the logarithmic Sobolev inequality (LSI)
with constant $C$ if for all sufficiently smooth functions $f$ it holds that
\begin{equation}
  \mathbf{E}_{\mu}[f^{2}\log f^{2}] - \E_{\mu}[f^{2}]\log\E_{\mu}[g^{2}]
  \leq 2C\mathbf{E}_{\mu}[\|\nabla f\|^{2}_{2}].
\end{equation}
To sample from a measure $\mu(dx) = \exp(-f(x))dx$ supported on $\R^{d}$, the
unadjusted Langevin Monte Carlo algorithm is defined via the following
recursive update rule:
\begin{equation}
  \label{eq:langevin-monte-carlo-updates}
  x_{k+1} = x_{k} - \eta \nabla f(x_{k}) + \sqrt{2\eta}
  Z_{k},\quad\text{where}\quad Z_{k} \sim \mathcal{N}(0, I_{d}).
\end{equation}
The following Theorem is due to \citet*[Theorem 3]{vempala2019rapid}.
\begin{theorem}
  \label{thm:langevin}
  Let $\mu(dx) = \exp(-f(x))dx$ be a measure on $\R^{d}$. Suppose that $\mu$
  satisfies LSI a with constant $C$ and that $f$
  has $L$-Lipschitz gradient with respect to the Euclidean norm.
  Consider the sequence of iterates $(x_{k})_{k \geq 0}$ defined via
  \eqref{eq:langevin-monte-carlo-updates} and let
  let $\rho_{k}$ be the distribution of $x_{k}$.
  Then, for any $\varepsilon > 0$, any $\eta \leq
  \frac{1}{8L^{2}C}\min\{1,\frac{\varepsilon}{4d}\}$, and any
  $k \geq \frac{2C}{\eta}\log\frac{2\kl{\rho_{0}}{\mu}}{\varepsilon}$,
  it holds that
  \begin{equation}
    \kl{\rho_{k}}{\mu} \leq \varepsilon.
  \end{equation}
\end{theorem}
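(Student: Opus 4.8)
The plan is to reproduce the interpolation argument of \citet*{vempala2019rapid}, which tracks $\kl{\rho_{k}}{\mu}$ along a continuous-time process agreeing with the ULA iterates at the grid times. First I would fix one step, say from time $k$ to $k+1$, and introduce the interpolation $x_{s} = x_{k\eta} - (s-k\eta)\nabla f(x_{k\eta}) + \sqrt{2}\,(B_{s} - B_{k\eta})$ for $s\in[k\eta,(k+1)\eta]$, with $(B_{s})$ a standard Brownian motion, so that $x_{(k+1)\eta}$ has the law of the next iterate. Writing $\rho_{s}$ for the law of $x_{s}$ and $b_{s}(x)=\mathbf{E}[\nabla f(x_{k\eta})\mid x_{s}=x]$, the marginals solve the Fokker--Planck equation $\partial_{s}\rho_{s} = \nabla\cdot(\rho_{s}b_{s}) + \Delta\rho_{s}$; differentiating relative entropy, integrating by parts, and using $\nabla\log\rho_{s} = \nabla\log(\rho_{s}/\mu) - \nabla f$ (valid since $\mu\propto e^{-f}$) yields the dissipation identity
\[
  \frac{d}{ds}\kl{\rho_{s}}{\mu}
  = -\mathbf{E}_{\rho_{s}}\!\big[\|\nabla\log(\rho_{s}/\mu)\|_{2}^{2}\big]
  + \mathbf{E}_{\rho_{s}}\!\big[\langle\nabla f(x_{s}) - b_{s}(x_{s}),\,\nabla\log(\rho_{s}/\mu)\rangle\big].
\]
A weighted Young inequality on the cross term leaves $\frac{d}{ds}\kl{\rho_{s}}{\mu} \le -\tfrac34 J_{s} + \mathbf{E}_{\rho_{s}}[\|\nabla f(x_{s}) - b_{s}(x_{s})\|_{2}^{2}]$, where $J_{s}$ is the relative Fisher information of $\rho_{s}$ with respect to $\mu$.

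Next I would bound the discretization error term. Since $b_{s}(x_{s})$ is the conditional expectation of $\nabla f(x_{k\eta})$ given $x_{s}$ and $\nabla f(x_{s})$ is $\sigma(x_{s})$-measurable, conditional Jensen and $L$-smoothness give $\mathbf{E}_{\rho_{s}}[\|\nabla f(x_{s})-b_{s}(x_{s})\|_{2}^{2}] \le L^{2}\mathbf{E}[\|x_{s}-x_{k\eta}\|_{2}^{2}] \le L^{2}\big(\eta^{2}\mathbf{E}_{\rho_{k\eta}}[\|\nabla f\|_{2}^{2}] + 2d\eta\big)$, using the explicit form of $x_{s}-x_{k\eta}$ and independence of the Brownian increment from $x_{k\eta}$. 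To control the remaining gradient second moment I would prove the pointwise-in-$\rho$ bound $\mathbf{E}_{\rho}[\|\nabla f\|_{2}^{2}] \le J_{\mu}(\rho) + 2dL$: this follows from the integration-by-parts identity $\mathbf{E}_{\rho}[\langle\nabla f,\nabla\log\rho\rangle] = -\mathbf{E}_{\rho}[\Delta f]$, the decomposition $\nabla\log\rho = \nabla\log(\rho/\mu)-\nabla f$, another Young inequality, and $-dL \le \Delta f \le dL$, which is where the Hessian bound $-LI\preceq\nabla^{2}f\preceq LI$ enters. Combining, the error accrued on step $k$ is at most $L^{2}\eta^{2}J_{k\eta} + 2d\eta L^{2}(1+\eta L)$.

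Finally I would assemble the recursion. Invoking the LSI hypothesis in the form $J_{s}\ge \tfrac{2}{C}\kl{\rho_{s}}{\mu}$ (conceding a factor two when the $J_{k\eta}$ feedback terms are folded back into the dissipation), the differential inequality on each interval becomes $\frac{d}{ds}\kl{\rho_{s}}{\mu} \le -\tfrac{1}{2C}\kl{\rho_{s}}{\mu} + 2d\eta L^{2}(1+\eta L)$, and Gr\"onwall over one step followed by iteration gives $\kl{\rho_{k}}{\mu} \le e^{-k\eta/(2C)}\kl{\rho_{0}}{\mu} + O(C d\eta L^{2})$. The stated step size $\eta \le \tfrac{1}{8L^{2}C}\min\{1,\varepsilon/(4d)\}$ forces the stationary term to be at most $\varepsilon/2$, while $k \ge \tfrac{2C}{\eta}\log\tfrac{2\kl{\rho_{0}}{\mu}}{\varepsilon}$ forces the transient term to be at most $\varepsilon/2$, giving $\kl{\rho_{k}}{\mu}\le\varepsilon$. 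The main obstacle is precisely the bookkeeping in this last step: the per-step error is not a pure constant but carries the Fisher information $J_{k\eta}$ of the current iterate, so a naive one-step recursion does not close; one must telescope the dissipation $-\tfrac34 J_{s}$ across consecutive intervals and verify that the constraint $\eta = O(1/(L^{2}C))$ makes $L^{2}\eta^{2}J_{k\eta}$ a small fraction of the dissipation $\Theta(\eta J_{k\eta})$ already spent, so that these feedback terms are reabsorbed without degrading the geometric contraction rate.
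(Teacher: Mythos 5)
First, a point you should be aware of: the paper does not prove Theorem~\ref{thm:langevin} at all. It is imported verbatim (up to notation, with the LSI constant $C$ here playing the role of $1/\alpha$ there) with the explicit attribution ``The following Theorem is due to \citet*[Theorem 3]{vempala2019rapid}.'' So there is no in-paper proof to compare against; what you have written is a reconstruction of the cited argument. As such, your skeleton is the right one and matches the source: the one-step interpolation, the Fokker--Planck equation with drift $b_s(x)=\mathbf{E}[\nabla f(x_{k\eta})\mid x_s=x]$, the dissipation identity, the Young split leaving $-\tfrac34 J_s$, the conditional-Jensen reduction of $b_s(x_s)$ to $\nabla f(x_{k\eta})$, the second-moment computation for $x_s-x_{k\eta}$, and the lemma $\mathbf{E}_\rho[\|\nabla f\|_2^2]\le J_\mu(\rho)+2dL$ are all correct, and the final accounting of the step-size and iteration-count conditions is consistent with the stated constants.

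The one step that does not close as written is exactly the one you flag as the main obstacle. After integrating your differential inequality over $[k\eta,(k+1)\eta]$ you are left with a term $+L^2\eta^3 J_{k\eta}$ to be cancelled against dissipation of the form $-\tfrac34\int J_s\,ds$ over that interval or the previous one. But the endpoint value $J_{k\eta}$ cannot be upper-bounded by the time-averaged Fisher information over an adjacent interval: $s\mapsto J_s$ is neither monotone nor equipped with any modulus-of-continuity estimate in your argument, so ``telescoping the dissipation across consecutive intervals'' is not a valid absorption. Two standard repairs, either of which makes your constants go through: (i) move the gradient second moment to time $s$ rather than $k\eta$ via the self-bounding inequality $\mathbf{E}\|\nabla f(x_{k\eta})\|_2^2\le 2\mathbf{E}\|\nabla f(x_s)\|_2^2+2L^2\mathbf{E}\|x_s-x_{k\eta}\|_2^2$, solve for $\mathbf{E}\|x_s-x_{k\eta}\|_2^2$ using $\eta L\le 1/4$, and then the feedback appears as $O(L^2\eta^2)\,J_s$, absorbed instantaneously into $-\tfrac34 J_s$ at the same instant; or (ii) replace your gradient-moment lemma by its Talagrand form $\mathbf{E}_\rho[\|\nabla f\|_2^2]\le 4L^2C\,\kl{\rho}{\mu}+2dL$ (Talagrand's transport inequality being a consequence of LSI with the same constant), so that the feedback becomes a perturbation of the contraction factor multiplying $\kl{\rho_{k\eta}}{\mu}$ and never needs to touch the dissipation at all. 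With either fix the remainder of your proposal is sound.
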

Thus, LSI on the measure $\mu$ provides convergence guarantees on
$\kl{\rho_{k}}{\mu}$. It is shown in \cite[Lemma 1]{vempala2019rapid} how to
initialize the iterate $x_{0}$ so that $\kl{\rho_{0}}{\mu}$ scales linearly
with the ambient dimension $d$ up to some additional terms.
The final condition described in Problem
Setting~\ref{setup:ball-and-squared-loss}
ensures that (by \cite[Lemma 1]{vempala2019rapid})
for any $\sigma > 0$, the initialization
scheme $x_{0} \sim \mathcal{N}(x_{\vecpsi}, I_{d})$
for the  Langevin algorithm \eqref{eq:langevin-monte-carlo-updates}
satisfies
\begin{equation}
  \label{eq:initialization-langevin-KL-bound}
  \kl{\rho_{0}}{\mu_{\vecpsi,\sigma}}
  \leq \frac{c_{\infty}(\mathcal{X})}{\tau} +
  \frac{d}{2}\log\frac{L_{\sigma}}{2\pi},
\end{equation}
where $L_{\sigma}$ is the smoothness constant of $V_{\vecpsi}/\tau +
\mathrm{dist}(x, \mathcal{X})/(2\sigma^{2})$ (see
Lemma~\ref{lemma:properties-of-mu-sigma}) and
$\mu_{\vecpsi,\sigma}$ is the probability measure defined in
\eqref{eq:mu-sigma-dfn}.

To implement the approximate Sinkhorn oracle described in
Definition~\ref{dfn:approximate-sinkhorn-oracle}, we can combine
Lemma~\ref{lemma:approximate-oracle-implementation} with approximate sampling
via Langevin Monte Carlo; note that by Pinsker's inequality, Kullback-Leibler
divergence guarantees provide total variation guarantees which are sufficient
for the application of Lemma~\ref{lemma:approximate-oracle-implementation}.
Therefore, providing provable convergence guarantees for
Algorithm~\ref{alg:inexact}
amounts to proving that we can do arbitrarily accurate approximate sampling
from distributions of the form
\begin{equation}
  \mu_{\vecpsi}(dx)
  \propto \mathbb{1}_{\mathcal{X}}(x)\exp(-V_{\vecpsi}(x)/\tau)dx,
  \quad\text{where}\quad V_{\vecpsi}(x) =
  \sum_{j=1}^{k}w_{j}\phi^{j}_{\psi^{j}}(x).
\end{equation}
Here $\mathbb{1}_{\mathcal{X}}$ is the indicator function of $\mathcal{X}$,
$\vecpsi$ is an arbitrary iterate generated by
Algorithm~\ref{alg:inexact}, and we consider the free-support setup
characterized via the choice $\piref(dx) = \mathbb{1}_{\mathcal{X}}dx$.

Notice that we cannot apply Theorem~\ref{thm:langevin} directly because the
measure $\mu_{\vecpsi}$ defined above has constrained support while
Theorem~\ref{thm:langevin} only applies for measures supported on all of
$\mathbb{R}^{d}$. Nevertheless, we will show that the compactly supported
measure $\mu_{\vecpsi}$ can be approximated by a measure $\mu_{\vecpsi,
\sigma}$, where the parameter $\sigma$ will trade-off LSI constant of
$\mu_{\vecpsi, \sigma}$ against the total variation norm between the two
measures. To this end, define
\begin{equation}
  \label{eq:mu-sigma-dfn}
  \mu_{\vecpsi, \sigma} =
  \propto
\exp(-V_{\vecpsi}(x)/\tau
-\mathrm{dist}(x, \mathcal{X})^{2}/(2\sigma^{2}))dx,
  \quad\text{where}\quad
  \mathrm{dist}(x,\mathcal{X})
  = \inf_{y \in \mathcal{X}}\|x - y\|_{2}.
\end{equation}

The following lemma, proved at the end of this section,
collects the main properties of the measure $\mu_{\vecpsi,\sigma}$.
\begin{lemma}
  \label{lemma:properties-of-mu-sigma}
  Consider the setup described in Problem
  Setting~\ref{setup:ball-and-squared-loss}.
  Let $\vecpsi$ be any iterate generated by Algorithm~\ref{alg:inexact} and
  let $\mu_{\vecpsi, \sigma}$ be the distribution defined in
  \eqref{eq:mu-sigma-dfn}. Then, the measure $\mu_{\vecpsi,\sigma}$ satisfies
  the following properties:
  \begin{enumerate}
    \item For any $\sigma \in (0,1/4]$ it holds that
     \begin{equation}
       \|\mu_{\vecpsi} - \mu_{\vecpsi, \sigma}\|_{\mathrm{TV}}
        \leq
        2\sigma
        \exp\left(\frac{8R^{2}}{\tau}\right)
        \left[
          \left(4Rd^{-1/4}\right)^{d-1}
          + 1
        \right].
    \end{equation}

   \item
     Let $V_{\sigma}(x) = \exp(-V_{\vecpsi}(x)/\tau
     -\mathrm{dist}(x, \mathcal{X})^{2}/(2\sigma^{2}))$; thus $\mu_{\vecpsi,
     \sigma}(dx) = \exp(-V_{\sigma}(x))dx$.
     The function $V_{\sigma}$ has $L_{\sigma}$-Lipschitz gradient where
       \begin{equation}
         \label{eq:L-sigma-dfn}
         L_{\sigma} =
         \frac{1}{\tau} + \frac{1}{\tau\lambda}4R^{2}\max_{j}m_{j} +
         \frac{1}{\sigma^{2}}.
       \end{equation}

     \item The measure $\mu_{\vecpsi,\sigma}$ satisfies LSI with a constant
       $C_{\sigma} = \mathrm{poly}(R, \exp(R^{2}/\tau), L_{\sigma})$.
  \end{enumerate}
\end{lemma}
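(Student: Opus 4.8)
I would prove the three claims in turn; throughout I use two observations. First, since $\phi_{\psi^{j}+a}=\phi_{\psi^{j}}-a$ for any constant $a$, adding constants to the blocks of $\vecpsi$ leaves both $\mu_{\vecpsi}$ and $\mu_{\vecpsi,\sigma}$ unchanged, so I may normalise $\inf_{\mathrm{supp}\,\nu^{j}}\psi^{j}=0$ and then, by the oscillation bound on the iterates (from the proof of Theorem~\ref{thm:inexact-scheme-convergence}) together with \eqref{eq:schroedinger-potentials-bounded}, $0\le\psi^{j}\le c_{\infty}(\mathcal{X})$ on $\mathrm{supp}\,\nu^{j}$. Second, from $\phi_{\psi^{j}}(x)=-\lambda\log\mathbf{E}_{Y\sim\nu^{j}}\exp((\psi^{j}(Y)-\|x-Y\|^{2})/\lambda)$ one gets a two-sided bound: pulling $\sup\psi^{j}$ out and using $\exp(-\|x-Y\|^{2}/\lambda)\le\exp(-\mathrm{dist}(x,\mathcal{X})^{2}/\lambda)$ gives $\phi_{\psi^{j}}(x)\ge\mathrm{dist}(x,\mathcal{X})^{2}-c_{\infty}(\mathcal{X})$, while Jensen's inequality together with $\psi^{j}\ge0$ gives $\phi_{\psi^{j}}(x)\le\mathbf{E}_{Y\sim\nu^{j}}[\|x-Y\|^{2}]$, which on $\mathcal{X}$ is at most $8R^{2}$. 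Summing with weights $w_{j}$, $V_{\vecpsi}\ge\mathrm{dist}(\cdot,\mathcal{X})^{2}-c_{\infty}(\mathcal{X})$ everywhere and $\sup_{\mathcal{X}}V_{\vecpsi}\le 8R^{2}$.

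For the first claim I would use that if $\mu\propto p\,\mathbb{1}_{\mathcal{X}}$ and $\widetilde\mu\propto q$ with $q=p$ on $\mathcal{X}$ and $q\ge0$, then $\|\mu-\widetilde\mu\|_{\mathrm{TV}}=\ell/(Z+\ell)\le\ell/Z$ with $Z=\int_{\mathcal{X}}p$ and $\ell=\int_{\mathbb{R}^{d}\setminus\mathcal{X}}q$. Here $q(x)=\exp(-V_{\vecpsi}(x)/\tau-\mathrm{dist}(x,\mathcal{X})^{2}/(2\sigma^{2}))$, so the lower bound on $V_{\vecpsi}$ gives $\ell\le e^{c_{\infty}(\mathcal{X})/\tau}\int_{\mathbb{R}^{d}\setminus\mathcal{X}}e^{-\mathrm{dist}(x,\mathcal{X})^{2}/(2\sigma^{2})}dx$; writing the latter as $\int_{0}^{\infty}e^{-r^{2}/(2\sigma^{2})}\,d\,\mathrm{vol}(\{\mathrm{dist}(\cdot,\mathcal{X})\le r\})$ and bounding the perimeter of the $r$-neighbourhood of the convex body $\mathcal{X}\subseteq\mathcal{B}_{R}$ by that of $\mathcal{B}_{R+r}$ yields, for $\sigma\le1/4$, a bound of the form $\sigma[(4Rd^{-1/4})^{d-1}+1]$ (after estimating the volume of a Euclidean ball by Stirling). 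For the denominator, $Z\ge\mathrm{vol}(\mathcal{X})e^{-8R^{2}/\tau}$ from the upper bound on $\sup_{\mathcal{X}}V_{\vecpsi}$; combining the two estimates (and carrying $\mathrm{vol}(\mathcal{X})$ through the perimeter bound) gives the stated inequality.

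For the second claim I would differentiate: $\nabla^{2}V_{\sigma}=\tfrac1\tau\sum_{j}w_{j}\nabla^{2}\phi_{\psi^{j}}+\tfrac1{2\sigma^{2}}\nabla^{2}(\mathrm{dist}(\cdot,\mathcal{X})^{2})$. The last term is $\preceq\tfrac1{\sigma^{2}}I$ since $\tfrac12\mathrm{dist}(\cdot,\mathcal{X})^{2}$ has gradient $x\mapsto x-\mathrm{proj}_{\mathcal{X}}(x)$, which is $1$-Lipschitz by convexity of $\mathcal{X}$. By the log-partition (exponential-family) identity, $\nabla^{2}\phi_{\psi^{j}}(x)=2I-\tfrac4\lambda\mathrm{Cov}_{\nu^{j}_{x}}(Y)$ with $\nu^{j}_{x}(y)\propto\exp((\psi^{j}(y)-\|x-y\|^{2})/\lambda)\nu^{j}(y)$, and since $\mathrm{supp}\,\nu^{j}_{x}\subseteq\mathcal{B}_{R}$ the covariance is $O(R^{2})$ in operator norm, uniformly in $x$; substituting gives that $V_{\sigma}$ has $L_{\sigma}$-Lipschitz gradient with $L_{\sigma}$ as in \eqref{eq:L-sigma-dfn} (the factor $\max_{j}m_{j}$ there comes from a deliberately crude covariance estimate and is inessential).

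The third claim is the substantive one, and the main obstacle. The measure $\mu_{\vecpsi,\sigma}$ is not log-concave—the potentials $\phi_{\psi^{j}}$, being $-\lambda\log$ of sums of log-concave Gaussians, are not convex—so neither Bakry--Émery nor a direct Holley--Stroock comparison against a Gaussian applies; the latter fails because $V_{\sigma}$ is not a bounded perturbation of any fixed quadratic (the entropic part contributes a term comparable to $\mathrm{dist}(x,\mathcal{X})^{2}$ but with an unbounded discrepancy from a global quadratic form). Instead I would combine the two facts already in hand: from the second claim, $\nabla^{2}V_{\sigma}\succeq-\kappa I$ with $\kappa=O(R^{2}/(\tau\lambda))$ (the distance regulariser only improves convexity), so $\mu_{\vecpsi,\sigma}$ is $\kappa$-semi-log-concave; and from the preliminary lower bound, $V_{\sigma}(x)\ge\tfrac1{2\sigma^{2}}\mathrm{dist}(x,\mathcal{X})^{2}-c_{\infty}(\mathcal{X})/\tau\ge\tfrac1{4\sigma^{2}}\|x\|^{2}-(\tfrac{R^{2}}{2\sigma^{2}}+\tfrac{c_{\infty}(\mathcal{X})}{\tau})$, so $\mu_{\vecpsi,\sigma}$ has at least Gaussian tails. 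These feed into a Lyapunov-type criterion for the logarithmic Sobolev inequality (e.g.\ of Cattiaux--Guillin--Wu type): the quadratic growth of $V_{\sigma}$ supplies a drift condition $\mathcal{L}W/W\le-\theta\|x\|^{2}+b\mathbb{1}_{K}$ with $W=\exp(\theta_{0}\|x\|^{2})$ on a compact $K\supseteq\mathcal{X}$, while a local logarithmic Sobolev inequality on $K$ is obtained from the semiconvexity and the Hessian bound by the convexification argument: on $K$ write $V_{\sigma}=(V_{\sigma}+\kappa\|x-x_{0}\|^{2})-\kappa\|x-x_{0}\|^{2}$, apply Bakry--Émery to the strongly convex first piece and a Holley--Stroock bound to the second (whose oscillation on $K$ is controlled by $\kappa\,\mathrm{diam}(K)^{2}$). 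Assembling these gives $C_{\sigma}$ polynomial in $L_{\sigma}$, $R$, $\sigma^{-1}$, $\tau^{-1}$, $d$ and $\exp(c_{\infty}(\mathcal{X})/\tau)$—of the claimed form since $c_{\infty}(\mathcal{X})=\mathrm{diam}(\mathcal{X})^{2}\le4R^{2}$. The delicate point is to carry out the local step without comparing $\mu_{\vecpsi,\sigma}|_{K}$ to the uniform measure on a ball straddling the $\sigma$-skin of $\mathcal{X}$, which would inject a spurious $\exp(R^{2}/\sigma^{2})$ factor; the convexification above, or an equivalent direct invocation of a black-box result giving LSI for $e^{-V}$ whenever $V$ is bounded below, $\nabla^{2}V\preceq LI$, and $V(x)\ge\alpha\|x\|^{2}-\beta$, avoids this.
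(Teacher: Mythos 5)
Your argument tracks the paper's Appendix~\ref{sec:proof-of-lemma-properties-of-mu-sigma} proof closely: the same total-variation computation via the ratio of normalising constants followed by a perimeter integration of the $r$-tube of $\mathcal{X}$; the same Hessian bound for the $\phi^{j}$ (your direct log-partition identity $\nabla^{2}\phi^{j} = 2I - (4/\lambda)\mathrm{Cov}_{\nu^{j}_{x}}(Y)$ is cleaner than the paper's one-dimensional restriction and correctly flags the $\max_{j}m_{j}$ factor as a crude, inessential overestimate); and the same Cattiaux--Guillin--Wu Lyapunov criterion for the LSI, which the paper simply cites whereas you spell out the drift condition and the local semi-log-concave-plus-Holley--Stroock step. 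In short, you have reproduced the paper's route, with somewhat more detail in the third part and a slightly sharper constant in the second.
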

Above, the notation $C = \mathrm{poly}(x,y,z)$ denotes a constant that depends
polynomially on $x,y$ and $z$.
With the above lemma at hand, we are ready to prove
Theorem~\ref{thm:inexact-algorithm-implementation}.

\begin{proof}[Proof of Theorem~\ref{thm:inexact-algorithm-implementation}]
  Let $\vecpsi$ be an arbitrary iterate generated via
  Algorithm~\ref{alg:inexact}.
  We can simulate a step of approximate Sinkhorn oracle with accuracy
  $\varepsilon$ via Lemma~\ref{lemma:approximate-oracle-implementation}
  (with $\zeta = \varepsilon/4$)
  in time $\mathrm{poly}(n,m,d)$ provided
  access to $n = \mathrm{poly}(\varepsilon^{-1}, m, \log(m/\delta))$ samples
  from any distribution $\mu_{\vecpsi}'$ such that
  \begin{equation}
    \label{eq:mu-prime-needed-approximation}
    \|\mu_{\vecpsi}' - \mu_{\vecpsi}\|_{\mathrm{TV}} \leq
    \frac{\varepsilon^{2}}{16m}.
  \end{equation}
  To find a choice of $\mu_{\vecpsi}'$ satisfying the above bound, consider
  the distribution
  \begin{equation}
    \mu_{\vecpsi, \sigma}\quad\text{with}\quad
      \sigma = \frac{\varepsilon^{2}}{32m}\cdot
      \left(
        2
        \exp\left(\frac{8R^{2}}{\tau}\right)
        \left[
          \left(4Rd^{-1/4}\right)^{d-1}
          + 1
        \right]
      \right)^{-1}.
  \end{equation}
  Let $C_{\sigma}$ and $L_{\sigma}$ be the LSI and smoothness constants of the
  distribution $\mu_{\vecpsi,\sigma}$ provided in
  Lemma~\ref{lemma:properties-of-mu-sigma}.
  By Theorem~\ref{thm:langevin}, it suffices to run the Langevin algorithm
  ~\eqref{eq:langevin-monte-carlo-updates} for $\mathrm{poly}(\varepsilon^{-1},
  m, d, C_{\sigma}, L_{\sigma})$ number of iterations to obtain a sample from
  a distribution $\widetilde{\mu}_{\vecpsi,\sigma}$ such that
  \begin{equation}
    \|\widetilde{\mu}_{\vecpsi, \sigma} - \mu_{\vecpsi, \sigma}\|_{\mathrm{TV}}
    \leq \frac{\varepsilon^{2}}{32m}.
  \end{equation}
  In particular, by the triangle inequality for the total variation norm,
  the choice $\mu_{\vecpsi}' = \widetilde{\mu}_{\vecpsi, \sigma}$ satisfies
  \eqref{eq:mu-prime-needed-approximation}. This finishes the proof.
\end{proof}

\subsection{Proof of Lemma~\ref{lemma:properties-of-mu-sigma}}
\label{sec:proof-of-lemma-properties-of-mu-sigma}
To simplify the notation, denote $\mu = \mu_{\vecpsi}, \mu_{\sigma} =
\mu_{\vecpsi, \sigma}$, $V(x) = V_{\vecpsi}(x)/\tau$,
and
$V_{\sigma}(x) = V(x)/\tau
  +\mathrm{dist}(x, \mathcal{X})^{2}/(2\sigma^{2})$.

\paragraph{Total variation norm bound.}

With the above shorthand notation, we have
\begin{equation}
  \mu(dx) = \mathbb{1}_{\mathcal{X}}Z^{-1}\exp(-V(x))dx,\quad\text{where}\quad
  Z = \int_{\mathcal{X}}\exp(-V(x))dx
\end{equation}
and
$$
  \mu_{\sigma}(dx) = (Z+Z_{\sigma})^{-1}\exp(-V_{\sigma}(x))dx,
  \quad\text{where}\quad
  Z_{\sigma} = \int_{\mathbb{R}^{d} \backslash
  \mathcal{X}}\exp(-V_{\sigma}(x))dx.
$$
We have
\begin{align}
  \|\mu - \mu_{\sigma}\|_{\mathrm{TV}}
  &=
  \int_{\R^{d}\backslash\mathcal{X}}(Z+Z_{\sigma})^{-1}\exp(-V_{\sigma}(x))dx
  + \int_{\mathcal{X}}|(Z+Z_{\sigma})^{-1} - Z^{-1}|\exp(-V(x))dx
  \\
  &=
  \frac{2Z_{\sigma}}{Z + Z_{\sigma}}
  \leq
  \frac{2Z_{\sigma}}{Z}
  \leq
  2\exp\left(\frac{c_{\infty}(\mathcal{X})}{\tau}\right)Z_{\sigma}
  \leq
  2\exp\left(\frac{4R^{2}}{\tau}\right)Z_{\sigma}.
\end{align}
We thus need to upper bound $Z_{\sigma}$.
Let $\mathrm{Vol}(A)$ be the Lebesgue measure of the set $A$,
let $\partial A$ denote the boundary of $A$,
and let $A + B = \{a + b : a \in A, b \in B\}$ be the Minkowski sum of sets
$A$ and $B$. Using the facts that for each $j \in \{1,\dots,k\}$ we have
$\sup_{y \in \mathcal{X}} \psi^{j}(y) \leq c_{\infty}(\mathcal{X}) \leq 4R^{2}$ and
that $\mathcal{X} \subseteq \mathcal{B}_{R}
= \{x : \|x\|_{2} \leq R\}$ we have
\begin{align}
  Z_{\sigma}
  &= \int_{\mathbb{R}^{d} \backslash
  \mathcal{X}}\exp(-V_{\sigma}(x))dx
  \\
  &\leq
  \exp\left(\frac{4R^{2}}{\tau}\right) \int_{\mathbb{R}^{d} \backslash
    \mathcal{X}}\exp\left(
  -\frac{\mathrm{dist}(x, \mathcal{X})}{2\sigma^{2}}\right)dx
  \\
  &=
  \exp\left(\frac{4R^{2}}{\tau}\right)
  \int_{0}^{\infty}
  \mathrm{Vol}(\partial(\mathcal{X} + \mathcal{B}_{x}))
    \exp\left(
    -\frac{x^{2}}{2\sigma^{2}}\right)dx
  \\
  &\leq
  \exp\left(\frac{4R^{2}}{\tau}\right)
  \int_{0}^{\infty}
  \mathrm{Vol}(\partial \mathcal{B}_{R+x})
    \exp\left(
    -\frac{x^{2}}{2\sigma^{2}}\right)dx
  \\
  &=
  \exp\left(\frac{4R^{2}}{\tau}\right)
  \frac{\pi^{d/2}}{\Gamma(d/2)}
  \int_{0}^{\infty}
    (R + x)^{d-1}
    \exp\left(
    -\frac{x^{2}}{2\sigma^{2}}\right)dx.
\end{align}
Bounding $(R+x)^{d-1} \leq 2^{d-1}R^{d-1} + 2^{d-1}x^{d-1}$
and computing the integrals results in
\begin{align}
  \|\mu - \mu_{\sigma}\|_{\mathrm{TV}}
  &\leq
  2\exp\left(\frac{8R^{2}}{\tau}\right)
  \frac{\pi^{d/2}}{\Gamma(d/2)}
  2^{d-1}
  \left[
    R^{d-1}
    \sigma
    \frac{\sqrt{\pi}}{2}
    +
    2^{d/2-1}\Gamma(d/2)\sigma^{d}
  \right]
  \\
  &\leq
  2\sigma\exp\left(\frac{8R^{2}}{\tau}\right)
  \left[
    \frac{(2R)^{d-1}}{\Gamma(d/2)}
    + (4\sigma)^{d-1}
  \right].
\end{align}
Using the assumption $\sigma \leq 1/4$ and using the bound $\Gamma(d) \geq
(d/2)^{d/2}$ we can further simplify the above bound to
\begin{equation}
  \|\mu - \mu_{\sigma}\|_{\mathrm{TV}}
  \leq
  2\sigma\exp\left(\frac{8R^{2}}{\tau}\right)
  \left[
    \left(4Rd^{-1/4}\right)^{d-1}
    + 1
  \right],
\end{equation}
which completes the proof of the total variation bound.

\paragraph{Lipschitz constant of the gradient.}
Recall that for any any $j \in \{1, \dots,d\}$ we have
\begin{equation}
  \phi^{j}(x) - \frac{1}{2}\|x\|_{2}^{2} = -\lambda\log\left(
    \sum_{l=1}^{n_j}
    \exp\left(\frac{\psi^{j}(y^{j}_{l}) - \frac{\|y^{j}_{l}\|_{2}^{2}}{2} +
    \langle x, y^{j}_{l}\rangle}{\lambda}\right)
    \nu^{j}(y^{j}_{l})
    \right).
\end{equation}
Denote $\widetilde{\phi}^{j}(x) = \phi^{j}(x) - \frac{1}{2}\|x\|_{2}^{2}$.
Fix any $x, x'$ and define $g(t) = \widetilde{\phi}^{j}(x + (x'-x)t)$.
Then, for any $t \in [0,1]$ we have
\begin{align}
  \label{eq:second-derivative-phi-tilde}
  g''(s) =
  -\frac{1}{\lambda}\mathrm{Var}_{L \sim \rho_{t}}\left[
    (Y^{j}(x' - x))_{L}
  \right]
  \geq
  -\frac{1}{\lambda}\|x-x'\|_{2}^{2}m_{j}4R^{2},
\end{align}
where
\begin{align}
  \rho_{t}(l)
  \propto
  \nu(y^{j}_{l})
  \exp\left(\frac{\psi^{j}(y^{j}_{l}) - \frac{\|y^{j}_{l}\|_{2}^{2}}{2} +
  \langle x + t(x' - x), y^{j}_{l}\rangle}{\lambda}\right)
\end{align}
and $Y^{j} \in \mathbb{R}^{d \times m_{j}}$ is the matrix whose $l$-th column
is equal to the vector $y^{j}_{l}$.

Because $\widetilde{\psi}^{j}$ is concave, the bound
\eqref{eq:second-derivative-phi-tilde} shows that
$\phi^{j}$ is $1 + \frac{1}{\lambda}m_{j}4R^{2}$-smooth.

Combining the above with the fact that the convex function
$\mathrm{dist}(x,\mathcal{X})$ has $1$-Lipschitz gradient
\cite[Proposition 12.30]{bauschke2017convex} proves the desired smoothness
bound on the function $V_{\sigma}$.

\paragraph{LSI Constant bound.}
The result follows, for example, by applying the sufficient log-Sobolev
inequality criterion stated in
\cite[Corollary 2.1, Equation (2.3)]{cattiaux2010note}, combined with
the bound \eqref{eq:second-derivative-phi-tilde}.
The exact constant appearing in the log-Sobolev inequality can be traced from
\cite[Equation (3.10)]{cattiaux2010note}.

\end{document}